\documentclass{amsart}
\usepackage{color}
\newtheorem{theorem}{Theorem}[section]

\newtheorem{lemma}[theorem]{Lemma}
\newtheorem{proposition}[theorem]{Proposition}
\theoremstyle{definition}
\newtheorem{definition}[theorem]{Definition}

\theoremstyle{remark}
\newtheorem{remark}[theorem]{Remark}
\numberwithin{equation}{section}

\newcommand{\sign}[1]{\mathrm{sgn}(#1)}

\usepackage[all,cmtip]{xy}

\usepackage{bm}
\usepackage{amssymb}

\usepackage{marvosym}%corresponding author
\usepackage{ifsym}

\makeatletter
\renewcommand{\section}{\@startsection{section}{1}{0mm}
 {-\baselineskip}{0.5\baselineskip}{\bf\leftline}}
\makeatother

\begin{document}

\title[Adaptive Fourier decomposition of slice regular functions]{Adaptive Fourier decomposition of slice regular functions}

\author[M. Jin]{Ming Jin}
\author[I. T. Leong]{Ieng Tak Leong}
\author[T. Qian]{Tao Qian*}
\author[G. B. Ren]{Guangbin Ren}
\address{Ming Jin, school of Mathematical sciences, Fudan University,
 Shanghai 200433, China}
\email{jinming$\symbol{64}$fudan.edu.cn}

\address{Ieng Tak Leong, Department of Mathematics, Faculty of Science and Technology, University of Macau, Macao, China}
\email{itleong$\symbol{64}$umac.mo}

\address{Tao Qian (Corresponding author),
Macau Institute of Systems Engineering, Macau University of Science and Technology, Macau, China}
\email{tqian$\symbol{64}$must.edu.mo}

\address{Guangbin Ren, Department of Mathematics, University of Science and
Technology of China, Hefei 230026, China}
\email{rengb$\symbol{64}$ustc.edu.cn}

\thanks{This work was supported by the Science and Technology Development Fund, Macau SAR (File no. 0123/2018/A3), University of Macau MYRG 2018-00168-FST \\
*Corresponding author: Tao Qian}
\keywords{adaptive Fourier decomposition, Takenaka-Malmquist system, slice regular function, quaternion}
\subjclass[2010]{30G35, 15A66}

\begin{abstract}
 In the slice Hardy space over the unit ball of quaternions, we introduce
the slice hyperbolic backward shift operator $\mathcal S_a$ with the decomposition process
$$f=e_a\langle f, e_a\rangle+B_{a}*\mathcal S_a f,$$
where $e_a$ denotes the slice normalized Szeg\"o kernel and $ B_a $ the slice Blaschke factor.
Iterating the above decomposition process, a corresponding maximal selection principle gives rise to the slice adaptive Fourier decomposition. This leads to a adaptive slice Takenaka-Malmquist orthonormal system.
%\textcolor{red}{If the maximum selection principle is not used in the algorithm, it yields the Beurling-Lax theorem about the decomposition of the slice Hardy space into shift invariant subspaces.}

\end{abstract}
\maketitle

\section{Introduction}

The purpose of this article is to introduce the quaternionic slice hyperbolic backward shift operators %$\mathcal S_a$
in the slice Hardy space $H^2(\mathbb B)$   of the unit ball of quaternions.
 At each of the process we decompose a function $f\in H^2(\mathbb B)$ into an orthogonal sum of two functions of which one is in the subspace generated by a slice normalized Szeg\"o kernel $e_a$ and the other is in its orthogonal complement expressed by the Blaschke factor and the backward shift operator $\mathcal S_a$.
By iterating the process we decompose a given function
$f\in H^2(\mathbb B)$ into a slice Takenaka-Malmquist orthonormal system.
%\textcolor{red}{It also yields the Berling-Lax theorem about the decomposition of $H^2(\mathbb B)$   into shift invariant subspaces.}

Our motivation comes from adaptive Fourier decompositions (AFDs) for the holomorphic Hardy spaces of the unit disc and of a half of the complex plane \cite{Qian_1}.
The type of decompositions provides
approximations by suitable linear combinations of
parameterized reproducing kernels in the respective Hardy spaces. The decompositions may result in merely an orthonormal system: It may not be a basis but adaptive to the given signal.
It, however, achieves fast decomposition through extracting out the greatest energy portion from the orthogonal remainder at each iterative step.
Together with the process there arise the Takenaka-Malmquist (TM) orthonormal systems (\cite{Walsh}). If instead of the maximal selection principle one uses a set of parameters satisfying the hyperbolic non-separability rule $\sum_{k=1}^\infty (1-|a_k|)=\infty,$ then the decomposition process results in a TM basis. The Fourier basis $\{z^n\}_{n=0}^\infty$ is a particular case corresponding to all the parameters $a_n$ being identically zero.
Since Takenaka-Malmquist system consists of
rational functions, the study falls into the scope of rational approximation (\cite{Walsh}).  The Takenaka-Malmquist bases can be thought of hyperbolic versions of the Fourier system.
The adaptive Fourier decomposition allows repeating selections of the parameters that offers attainability of the best matching pursuit at each step of parameter selection.
 The adaptive Fourier decomposition methodology facilitates efficient and thus useful sparse representations. It is, in particular, effective when in the underlying Hilbert space there does exist an approximation theory. % \cite{Alpay_2}.
Adaptive Fourier decomposition with different contexts has undergone substantial developments (\cite{Alpay_1,Alpay_2,Qian_3,Qian_5}) with ample applications, such as digital signal processing \cite{Qian_4}, image processing \cite{LZQ_1}, and system identification \cite{WQLG_1}.

In this paper we establish the slice adaptive Fourier decomposition of slice regular functions over quaternions.
It is a higher dimensional extension of the subject in the one complex variable case.
 The slice regular function theory of quaternions was initiated  by Gentili and Struppa \cite{Gentili_2}, and soon developed by a number of researchers. See, for instance \cite{Colombo_1,Alpay_3,AFS_book,Ghiloni_1,Gentili_B}. This theory generalizes
the holomorphic theory of one complex variable to quaternions.
 It is remarkable that the slice regular function theory relies on the slice structure of quaternions, which is different from the monogenic function theory over Clifford algebras. The great difference among the slice analysis, Clifford analysis, several complex variables is due to the canonical topology in slice analysis distinct to the Euclidean topology;
see \cite{Dou_1}.
The slice theory shows vigorous vitality in non-commutative Clifford algebra \cite{Colombo_1}, and non-associative real alternative algebras \cite{Ghiloni_1}, \cite{Ren_3}. It also has significant applications  in
differential geometry \cite{Gentili_4}, geometric function theory \cite{Ren_1},  and operator theory \cite{CSS_book}.

To state our main results we provide some preliminaries of the slice Hardy space which are mostly adopted from \cite{Alpay_3, AFS_book}. Here we introduce an equivalent definition.
The slice Hardy space $ H^2( \mathbb B) $ over the unit ball of the quaternions  consists  of the slice regular  functions $f:\mathbb B\to\mathbb H$ satisfying
 \[ \|f\|:=\Big(\frac{1}{4\pi^2}\int_{\partial \mathbb B}\frac{1}{|Im(q)|^2}|f(q)|^2d\sigma\Big)^{1/2}<\infty,   \]
where $ d\sigma $ is the Lebesgue surface measure  on $\partial \mathbb B$.
The polarization identity of this norm provides $ H^2( \mathbb B) $ with an inner product so that
it becomes
 a quaternionic Hilbert space with
 reproducing kernel.
   Its  normalized reproducing kernel is called  the slice normalized Szeg\"o kernel,  defined as
\begin{equation*}\label{e_aa}
e_a(q):=\sqrt{1-|a|^2}(1-q\bar a)^{-*},
\end{equation*}
 for   a parameter $ a \in \mathbb B $ and any $q\in\mathbb B,$
 where the $*$-product is defined by
 $$(f*g)(q)=\sum_{n=0}^\infty q^n \sum_{k=0}^na_k b_{n-k},$$
 for any  two slice regular functions
 $$f=\sum_{n=0}^\infty q^n a_n   \quad \mbox{and} \quad g=\sum_{n=0}^\infty q^n b_n,$$
 where all  $a_n, b_n\in\mathbb H.$ Thus, $f^{-*}$ denote as the inverse of $f$ under this $*$-product.

 Based on the $*$-product, we introduce
 the quaternionic slice hyperbolic backward shift operators $\mathcal S_a$, which is uniquely determined by the identity
 \begin{eqnarray}\label{def:shift-101}
 f=e_a\langle f, e_a\rangle+B_{a}*\mathcal S_a f,
 \end{eqnarray}
 for any $a\in\mathbb B$.  Here   $B_a$ is the slice M\"obius transformation, or an order-1 Blaschke product:
\[B_a(q):= (1-q\bar a)^{-*}*(a-q)\frac{a}{|a|}. \]
It is noted that the notion of the Hardy spaces and the Blaschke factors in relation to slice regular functions have been introduced and studied by researchers, first appearing in \cite{Alpay_3}, and also others, which are summarized in the book \cite{AFS_book}.

Iterating the construction in (\ref{def:shift-101}),
we achieve an algebraic relation
%a greedy algorithm, i.e. the following  approximations
\begin{eqnarray}\label{def:TM-102}f=\sum_{j=1}^nT_{j}\langle f, T_{j}\rangle +B_n*
\big(\mathcal S_{a_n}\circ  \cdots \circ\mathcal S_{a_1}f\Big)\end{eqnarray}
for arbitrary $a_1, \ldots, a_n\in \mathbb H$, where we denote
$$T_n=B_{n-1}*e_{a_n},$$
and
$$B_n=B_{a_1}*B_{a_2}*\cdots*B_{a_{n}}.$$
Here $\{T_{n}\}_{j=1}^n$ is defined to be the  slice
Takenaka-Malmquist  system which constitutes an slice orthonormal system in $H^2(\mathbb B)$ associated with the non-orthogonal set $\{e_{a_j}\}_{j=1}^n$.
We point out that in the classical case a TM system is identical with the Gram-Schmidt (GS) orthogonalization applied to the corresponding Szeg\"o kernels. However,  in our slice case,  the GS process is no longer valid since the product among slice regular functions is now $*$-product.
This $*$-product brings challenge of verifying the orthogonality of a TM system.

Decomposition (\ref{def:TM-102})
 can be restated as a Beurling-Lax type relation: %the Beurling-Lax theorem:
 \begin{eqnarray}\label{def:Beurling-Lax-103}
 H^2(\mathbb B)=\mbox{span}\{T_1, \ldots, T_n\} \oplus B_n *H^2(\mathbb B).\end{eqnarray}
By applying the maximum selection principle, we obtain the slice  $AFD$ for quaternionic slice Hardy space functions.

The paper is organized as follows.
In \S 2, we recall some basic concepts and results of slice regular functions and the foundation of  the slice Hardy space.
%In \S 3, we introduce the slice Hardy space and its dictionary.
In \S 3, the slice Takenaka-Malmquist orthonormal system is established.
In \S 4, We introduce the iterative process, the adaptive selecting process, and prove convergence of  the slice adaptive Fourier series.
In \S 5 a convergence rate result is proved.
The case of slice Hardy space of the right half plane is outlined in \S 6.

\section{Preliminary}
\subsection{Slice regular functions}

This paper works on  slice regular functions  over the non-commutative quaternionic field \cite{Gentili_B}. Firstly, the quaternionic field $ \mathbb H $ is linearly generated by an orthogonal basis $ \{1, e_1,e_2,e_3:=e_1e_2\} $ of $ \mathbb R^4 $ with the following multiplication rule:
\[ e_ie_j+e_je_i=-2\delta_{ij}, \quad\quad i,j=1, 2, 3,\]
where $ \delta_{ij} $ equals $ 1 $ if $ i=j $ and $ 0 $ otherwise.

Now we recall some   definitions and results of the slice regular function theory.
This theory is based on the slice structure of  $ \mathbb H $, i.e.
$$ \mathbb H=\bigcup_{I \in \mathbb S} \mathbb C_I, $$
where  $ \mathbb S $ denotes  the set of imaginary units of  $ \mathbb H $, namely,
\[ \mathbb S:=\{ q \in \mathbb H  \  \bm{\big\vert} \ q^2=-1 \},  \]
and $ \mathbb C_I $ denotes the slice of $ \mathbb H $ made through $ I $, i.e.,
$$ \mathbb C_I:=\{x+yI, \quad x,y\in \mathbb R\}. $$
According to the slice structure, any $ q \in \mathbb H $ can be written as $ q=x+yI $ with $ x,\ y \in \mathbb R $ and $ I \in \mathbb S $.

\begin{definition}[slice function]\label{slice}
%Let $ f $ be a quaternion-valued function defined on a set  $ \Omega\subset \mathbb H $ for which it satisfies
Let $ \Omega $ be a set in $ \mathbb H $, and $ f $ a quaternion-valued function defined on $ \Omega $ that satisfies
\[f(x+yJ)=\frac{1}{2}\big(f(x+yI)+f(x-yI)\big)+\frac{JI}{2}\big(f(x-yI)-f(x+yI)\big),\]
provided  $ x,\ y \in \mathbb R $ and  $ I, \ J \in \mathbb S $ such that  $ x\pm yI $ and $ x+yJ $ belong to $ \Omega .$
Then the function $ f $ is said to be a slice function on $\Omega$.
\end{definition}

\begin{definition}[slice regular function]
Let $ f $ be a  slice function  defined on a  domain $ \Omega \subset \mathbb H $. For each $ I \in \mathbb S $, let
$ \Omega_I:=\Omega\cap \mathbb C_I $ and $ f_I:=f|_{\Omega_I} $ be the restriction of $ f $ to $ \Omega_I $. The restriction $ f_I $ is said to be holomorphic if it has continuous partial derivatives and
\[\bar \partial_I f_I(x+yI)=\frac{1}{2}\big(\partial_x+I\partial_y\big)f_I(x+yI\big)=0.\]
If for each $ I\in \mathbb S $, $ f_I $ is holomorphic in $ \Omega_I $, then $ f $ is called a slice (left) regular function.
\end{definition}

\begin{remark}
 Similarly, we can define slice right regular functions.
 \end{remark}

\begin{lemma}[splitting]
Let $ I \in \mathbb S $ and $ \Omega_I $ be open in $ \mathbb C_I $. The function $ f_I: \Omega_I \to \mathbb H  $ is holomorphic if and only if, for all $ J\in \mathbb S $  with $ J \perp I $ and every $ z=x+yI $, there holds
\[ f_I(z)=F(z)+G(z)J,\]
where $ F, \ G:\Omega_I \to \mathbb C_I $ are complex-valued  holomorphic functions of one complex variable.
\end{lemma}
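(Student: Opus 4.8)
The plan is to reduce the analytic condition $\bar\partial_I f_I=0$ to a pair of scalar Cauchy--Riemann equations by exploiting an orthogonal direct-sum decomposition of $\mathbb H$ induced by $I$ and $J$. First I would fix $I\in\mathbb S$ and choose $J\in\mathbb S$ with $J\perp I$. Since $I,J$ are orthogonal imaginary units, the product $IJ$ is again an imaginary unit orthogonal to $1,I,J$, so that $\{1,I,J,IJ\}$ forms a real orthonormal basis of $\mathbb H$. Grouping the first two and the last two basis vectors yields the real direct-sum decomposition
\[
\mathbb H=\mathbb C_I\oplus \mathbb C_I J,
\]
where $\mathbb C_I=\mathrm{span}_{\mathbb R}\{1,I\}$ and $\mathbb C_I J=\mathrm{span}_{\mathbb R}\{J,IJ\}$.

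The existence of the representation $f_I=F+GJ$ is then purely algebraic and uses no holomorphy: for each $z\in\Omega_I$ the value $f_I(z)\in\mathbb H$ decomposes uniquely according to the splitting above, and collecting the $\mathbb C_I$-component and the coefficient of $J$ defines $\mathbb C_I$-valued functions $F$ and $G$ on $\Omega_I$. The content of the lemma is therefore the equivalence between holomorphy of $f_I$ and holomorphy of the two components $F,G$.

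To establish this equivalence I would apply the operator $\bar\partial_I=\tfrac12(\partial_x+I\partial_y)$ directly to $f_I=F+GJ$. Because the operator multiplies by $I$ on the left while $J$ sits on the right, and because $F$, $G$, and $I$ all lie in the commutative field $\mathbb C_I$, a short computation gives
\[
\bar\partial_I f_I=(\bar\partial_I F)+(\bar\partial_I G)J,
\]
where $\bar\partial_I F$ lies in $\mathbb C_I$ and $(\bar\partial_I G)J$ lies in $\mathbb C_I J$. The main point---and the step I expect to require the most care---is the non-commutative bookkeeping: one must verify that the left-multiplication by $I$ in the operator passes through the right factor $J$ and reduces to a genuine one-variable Cauchy--Riemann operator acting on each $\mathbb C_I$-valued component, undisturbed by the algebra of $\mathbb H$. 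This is exactly where the orthogonality $J\perp I$ and the placement of $J$ on the right are essential, since they guarantee that $(\bar\partial_I G)J$ remains in the complementary slice $\mathbb C_I J$.

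Finally, since the sum $\mathbb C_I\oplus\mathbb C_I J$ is direct, the vanishing $\bar\partial_I f_I=0$ forces both $\bar\partial_I F=0$ and $\bar\partial_I G=0$ separately, and conversely. Identifying $\mathbb C_I$ with $\mathbb C$ via $I\leftrightarrow i$, these two conditions are precisely the statement that $F$ and $G$ are holomorphic functions of one complex variable, which completes both directions of the equivalence.
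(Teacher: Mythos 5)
Your proof is correct and is exactly the standard argument for the splitting lemma: decompose $\mathbb H=\mathbb C_I\oplus\mathbb C_I J$ using the orthonormal basis $\{1,I,J,IJ\}$, observe that $\bar\partial_I(F+GJ)=\bar\partial_I F+(\bar\partial_I G)J$ because $I(GJ)=(IG)J$ with $IG\in\mathbb C_I$, and conclude from directness of the sum. The paper states this lemma without proof (it is recalled from the Gentili--Struppa theory), and your argument matches the one given in the cited literature.
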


In the slice regular function theory, under the usual multiplication the product of two slice regular functions is no longer slice regular in general. So there comes the $*$-product.

\begin{definition}[$*$-product]\label{*}
Let $ \mathbb B$ be the Euclidean unit ball
of $ \mathbb H $. Let $ f, \ g: \mathbb B \to \mathbb H $ be  slice regular functions and let $ f(q)=\sum_{n\in\mathbb N}q^n a_n, \ g(q)=\sum_{n\in\mathbb N}q^n b_n$ where $ a_n, b_n \in \mathbb H $ be their power series expansions. The $*$-product of $ f $ and $ g $ is the slice regular function defined by  %(\cite{AFS_book}, also in \cite{Gentili_B}, Definition 1.26)
( \cite{Gentili_B}, Definition 1.26)
\[ (f*g)(q)=\sum_{n\in\mathbb N}q^n \sum_{k=0}^na_kb_{n-k}. \]
The regular conjugate of $ f $ is the slice regular function defined by
\[ f^c(q)=\sum_{n\in\mathbb N}q^n \bar a_n.\]
The symmetrization of $ f $ is defined to be  the function
\[f^s=f*f^c=f^c*f.\]
Furthermore, if $ f \ne 0 $, the regular reciprocal of $ f $ is the function defined on
$ \mathbb B\setminus Z_{f^s} $ as
\[ f^{-*}=\frac{1}{f^s}f^c,\]
where $ Z_{f^s} $ is the zero set of $ f^s $.
\end{definition}

The $*$-product of two slice regular functions is slice regular and it is related to the usual multiplication through the following relations (see \cite{Gentili_B}, Theorem 3.4):
\begin{equation}\label{z_wan}
(f*g)(q)=\begin{cases} f(q)g(\tilde q), \quad &\textit{if }  f(q)\ne 0, \\
0, \quad & \mbox{otherwise}, \end{cases}
\end{equation}
where $\tilde q= f^{-1}(q)q f(q)\in [q] $ with $ [q] $  the symmetry of $ q=x+yI $ defined by
 \[ [q]:=\{x+yJ  \  \bm{\big\vert} \ J\in \mathbb S \}. \]
 Furthermore,
 \begin{equation}\label{z_wanw}
f^{-*}*g(q)=f^{-1}(\hat q)g(\hat q), \quad  \forall q\in \mathbb B\setminus Z_{f^s},
\end{equation}
where $ \hat q= f^c(q)^{-1}q f^c(q)\in [q] $.

A set $ \Omega\subset \mathbb H $ is said to be axially symmetry if  for any point  $q\in\Omega $,  there holds $ [q]\subset\Omega $.

%\begin{remark}
%The domain of definition in Definition \ref{*} can extend to any axially symmetric domain.
%\end{remark}

\begin{definition}Let $\Omega $ be an axially symmetric domain in $ \mathbb H $. A slice regular function $ f:\Omega \to \mathbb H $ such that $ f(\Omega_I)\subset \mathbb C_I $ for all $ I\in \mathbb S $ is called a slice preserving function.
\end{definition}

\begin{remark}
If $ f$ is a slice function on an axially symmetric domain $ \Omega$, its symmetrization function $ f^s:\Omega \to \mathbb H $ is  a slice preserving function.
\end{remark}

\begin{theorem}[Cauchy's formula]\label{Cauchy}
Let $ f $ be a slice regular function on an open set $ \Omega\subset \mathbb H $. If $ U $ is a bounded axially symmetric open set with $ \bar U \subset \Omega $ where $ \bar U $ is the closure of $ U $ and if $ \partial U_I $ for $ I\in \mathbb S $ is a finite union of disjoint rectifiable Jordan curves, then for $ q\in U $,
\[ f(q)=\frac{1}{2\pi}\int_{\partial U_I }(s-q)^{-*}ds_If(s).\]
where $ ds_I=-Ids $.
\end{theorem}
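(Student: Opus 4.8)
The plan is to reduce the statement to a single slice, where it becomes the classical Cauchy integral formula of one complex variable, and then to propagate the resulting identity to the whole of $U$ by exploiting that both sides are slice functions. Fix once and for all an imaginary unit $I\in\mathbb S$ and a unit $J\in\mathbb S$ with $J\perp I$.

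First I would work on the slice $U_I$. For $z\in U_I$ the splitting lemma writes $f_I=F+GJ$ with $F,G\colon U_I\to\mathbb C_I$ holomorphic in one complex variable. Since $\partial U_I$ is a finite union of rectifiable Jordan curves bounding $U_I$, the classical Cauchy formula on $\mathbb C_I\cong\mathbb C$ applies to $F$ and to $G$ separately, giving $H(z)=\frac{1}{2\pi I}\int_{\partial U_I}(s-z)^{-1}H(s)\,ds$ for $H\in\{F,G\}$, where every factor lies in the commutative field $\mathbb C_I$. Writing $ds_I=-I\,ds$ and using $I^{-1}=-I$, one has $\frac{1}{2\pi I}(s-z)^{-1}H(s)\,ds=\frac{1}{2\pi}(s-z)^{-1}\,ds_I\,H(s)$, because $(s-z)^{-1}$, $ds_I$ and $H(s)$ all commute in $\mathbb C_I$. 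Substituting $f(s)=F(s)+G(s)J$ and transporting the terminal constant $J$ untouched through each integral, the two applications recombine into
\[
f_I(z)=\frac{1}{2\pi}\int_{\partial U_I}(s-z)^{-1}\,ds_I\,f(s),\qquad z\in U_I .
\]
Here I would stress the ordering: $(s-z)^{-1}$, $ds_I$ and the $\mathbb C_I$-components of $f$ commute among themselves while $J$ is merely carried along, so no spurious sign appears.

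It remains to identify the slice kernel with the restriction of $(s-q)^{-*}$ and to pass from $U_I$ to $U$. For fixed $s\in\partial U_I$ the map $q\mapsto(s-q)^{-*}$ is the regular reciprocal of the degree-one polynomial $q\mapsto s-q$; since $s\in\partial U_I$ lies outside the open set $U$ while $[q]\subset U$ for every $q\in U$ by axial symmetry, one has $q\notin[s]$, so the symmetrization $(s-q)^s$ does not vanish on $U$ and the kernel is slice regular there. Using the pointwise formula (\ref{z_wanw}), the twist point satisfies $\hat z=z$ for $s,z\in\mathbb C_I$, whence $(s-q)^{-*}\big|_{q=z}=(s-z)^{-1}$; thus the displayed identity is precisely the asserted formula evaluated on $U_I$. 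Setting
\[
\Phi(q):=\frac{1}{2\pi}\int_{\partial U_I}(s-q)^{-*}\,ds_I\,f(s),\qquad q\in U,
\]
differentiation under the integral sign (the integrand being continuous on the compact set $\partial U_I$ and slice regular in $q$) shows $\Phi$ is slice regular, hence a slice function. As $\Phi=f$ on $U_I$ and both $f$ and $\Phi$ obey the representation formula of Definition \ref{slice}, their values at any $q=x+yJ\in U$ are determined by their coinciding values at $x\pm yI\in U_I$, so $\Phi\equiv f$ on $U$, which is the claim.

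The main obstacle is the content of the last paragraph: verifying that the Cauchy kernel $(s-q)^{-*}$ is genuinely slice regular in $q$ and well defined throughout $U$ — which is exactly where the axial symmetry of $U$ and the transfer formula (\ref{z_wanw}) are indispensable — and then legitimising the differentiation under the integral sign so that $\Phi$ qualifies as a bona fide slice function to which Definition \ref{slice} can be applied. By contrast, the slice computation itself is routine, once the ordering of the noncommuting factors is handled with the care indicated above.
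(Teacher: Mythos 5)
The paper does not actually prove this theorem: it is recalled in the preliminaries as a known result of the slice regular function theory (cf.\ \cite{Gentili_2}, \cite{Gentili_B}), so there is no in-paper argument to compare against. Your proof is the standard one from that literature and is correct: the splitting lemma reduces the identity on the slice $U_I$ to the classical Cauchy formula for the $\mathbb C_I$-valued components $F$ and $G$ (with the constant $J$ harmlessly carried on the right), the kernel $(s-q)^{-*}$ is checked to be well defined and slice regular on $U$ because axial symmetry forces $[s]\cap U=\emptyset$ for $s\in\partial U_I$, it restricts to $(s-z)^{-1}$ on $\mathbb C_I$, and the representation formula of Definition \ref{slice} propagates the equality from $U_I$ to all of $U$. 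The only points worth stating a touch more explicitly are that the integral $\Phi$ is itself a slice function (the kernel is the product of a slice preserving factor $\left((s-q)^s\right)^{-1}$ and the slice function $\bar s-q$, and right multiplication by constants and integration preserve sliceness), and the orientation convention for the Jordan curves in the classical formula; neither is a gap.
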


\subsection{The foundation of the slice Hardy space over $ \mathbb B $}
In this subsection, we recall the precondition of the slice Hardy space over the unit ball  \cite{AFS_book}.
Let  $ \mathbb B $ be the Euclidean unit ball of $ \mathbb H $ and  $ \mathbb T:=\partial \mathbb B $ its boundary.
For any $  I\in \mathbb S $, denote
  $\mathbb T_I:=\mathbb T \cap \mathbb C_I $.
Let $ L^2(\mathbb T_I) $    be the function space  consisting of  Lebesgue measurable slice  functions $ f $ defined  on $ \mathbb T $  for which
\[ \frac{1}{2\pi}\int_0^{2\pi}|f_I(e^{It})|^2dt<\infty.  \]
The splitting lemma provides a power series expansion of $ f_I $, i.e.
\begin{equation}\label{FFII}
f_I(e^{It})=\sum\limits_{k=-\infty}^{\infty}e^{Ikt}a_k,
\end{equation}
where $ a_k \in \mathbb H $  satisfies
\[\sum\limits_{k=-\infty}^{\infty}|a_k|^2< \infty.\]
For any $ f,\ g \in L^2(\mathbb T_I) $,  the  inner product $$ \left< \cdot, \cdot \right>: L^2(\mathbb T_I)   \times L^2(\mathbb T_I)  \rightarrow \mathbb H $$ is defined by
\begin{equation}\label{inner}
 \left< f_I, g_I \right>:=\frac{1}{2\pi}\int_{0}^{2\pi}\overline{ g_I(e^{It})}f_I(e^{It})dt.
\end{equation}

It is easy to verify that $ \left< \cdot, \cdot \right> $ is  an inner product. i.e.
for any $ f,g,h\in L^2(\mathbb T_I)$ and  $ \lambda, \mu \in \mathbb H $, there hold
\begin{itemize}
\item $  \left< f_I\lambda+g_I\mu,h_I \right>= \left< f_I,h_I \right>\lambda+ \left< g_I,h_I \right>\mu.$
\item $  \left< f_I, g_I \right>=\overline{\left< g_I, f_I \right>}.$
\item $ \left< f_I, f_I \right>\geqslant 0$, where the equality holds if and only if $ f_I=0 $.
\end{itemize}
Furthermore, there holds the Cauchy-Schwarz inequality, i.e.
\begin{equation}\label{C-S}
 |\left< f_I, g_I \right>|^2\leqslant \left< f_I, f_I \right>\left< g_I, g_I \right>.
 \end{equation}

The power series expansion in (\ref{FFII}) shows that $ L^2(\mathbb T_I) $ is a right $ \mathbb H $-module. Thus, $ L^2(\mathbb T_I) $ equipped with the inner product $ \left< \cdot, \cdot \right> $ is a right $\mathbb H$-module inner product space.

Denote
\[ H_+^2( \mathbb T_I):=\{  f_I(e^{It})=\sum\limits_{k=0}^{\infty}e^{Ikt}a_k \ : \ \ a_k\in\mathbb H, \ \ \sum\limits_{k=0}^{\infty}|a_k|^2< \infty \}.\]
It is a closed subspace  of $ L^2(\partial \mathbb B_I) $.
Recall that the Hilbert transformation $$ \tilde H: L^2(\mathbb T_I)\to L^2(\mathbb T_I) $$ is defined by
\[\tilde H f_I(e^{It})=\sum\limits_{k=-\infty}^{\infty}(-I)\sign ke^{Ikt}a_k,\]
where $ a_0 $ is the  coefficient in formula (\ref{FFII}).
Thus, each $ f \in  H^2(\mathbb T_I) $ can be represented as
\[  f_I = \frac{a_0+f_I+I\tilde H f_I}{2}.\]
In fact, $ L^2(\mathbb T_I) $ has the following direct sum decomposition
\begin{equation}\label{HH--}
L^2(\mathbb T_I)=H_+^2( \mathbb T_I)\oplus H_-^2( \mathbb T_I),
\end{equation}
where
\[ H_-^2( \mathbb T_I):=\{  f_I(e^{It})=\sum\limits_{k=-\infty}^{-1}e^{Ikt}a_k:   \ \  a_k\in\mathbb H, \ \ \sum\limits_{k=0}^{\infty}|a_k|^2< \infty \}.\]

Thus, the function space $ H^2(\mathbb B_I) $ with  $  I\in \mathbb S $ is the function space  consisting of  slice regular functions $ f $ defined  in $ \mathbb B $ for which
\[ \|  f_I\|^2:=\sup\limits_{0\leqslant r<1}\frac{1}{2\pi}\int_0^{2\pi}|f_I(re^{It})|^2dt<\infty.   \]

For any $ f \in H^2(\mathbb B_I) $, define its radial limit
\begin{equation}\label{def:rad}
 \hat{f_I}(e^{It}):=\lim_{r\to1} f_I(re^{It}).
 \end{equation}
The limit $ \hat f_I $ exists almost everywhere.

\begin{theorem}\label{H2}
Let $ f\in H^2(\mathbb B_I) $ for some $ I \in \mathbb S $. The radial limit of $ f_I $ exists almost everywhere on  $ \mathbb T_I $. Furthermore, there is an isometric isomorphism
\begin{eqnarray*}
 H^2(\mathbb B_I)&\to& H_+^2( \mathbb T_I)
 \\
 f&\mapsto& \hat{f_I}.
 \end{eqnarray*}

\end{theorem}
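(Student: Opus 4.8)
The plan is to reduce the statement to the classical one–variable Hardy space theory on the disc $\mathbb{C}_I$, using the power series expansion of $f$ together with the splitting lemma. Write the slice regular function as $f(q)=\sum_{n=0}^{\infty}q^{n}a_{n}$ with $a_{n}\in\mathbb{H}$; restricting to the slice gives $f_{I}(re^{It})=\sum_{n=0}^{\infty}r^{n}e^{Int}a_{n}$, the series converging absolutely and uniformly for $|z|=r<1$. The backbone of the argument is a quaternionic Parseval identity for this expansion, from which both the norm identity and the coefficient characterization of the image fall out; the existence of radial limits is then imported from the scalar theory via splitting.

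First I would establish the Parseval identity. For fixed $r<1$ I expand $|f_{I}(re^{It})|^{2}=\overline{f_{I}}\,f_{I}$ and integrate term by term, which is justified by uniform convergence on the circle of radius $r$. Because the quaternionic factors $\overline{a_{m}}$ and $a_{n}$ are constant while the surviving scalar integral $\frac{1}{2\pi}\int_{0}^{2\pi}e^{I(n-m)t}\,dt=\delta_{mn}$ is real, it commutes through, giving
\[
\frac{1}{2\pi}\int_{0}^{2\pi}|f_{I}(re^{It})|^{2}\,dt=\sum_{n=0}^{\infty}r^{2n}\overline{a_{n}}a_{n}=\sum_{n=0}^{\infty}r^{2n}|a_{n}|^{2}.
\]
Taking the supremum over $0\le r<1$ (monotone convergence) yields $\|f_{I}\|^{2}=\sum_{n=0}^{\infty}|a_{n}|^{2}$. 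In particular $f\in H^{2}(\mathbb{B}_{I})$ forces $\sum|a_{n}|^{2}<\infty$, which is exactly the defining condition for the boundary series $\sum_{k\ge 0}e^{Ikt}a_{k}$ to lie in $H_{+}^{2}(\mathbb{T}_{I})$, and by the same computation applied to the inner product \eqref{inner} the $H_{+}^{2}(\mathbb{T}_{I})$–norm of that series is also $\big(\sum|a_{n}|^{2}\big)^{1/2}$.

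Next I would produce the radial limit and identify it with the boundary series. Here the splitting lemma is the key device: it writes $f_{I}=F+GJ$ for some $J\perp I$, with $F,G:\mathbb{B}_{I}\to\mathbb{C}_{I}$ complex–holomorphic, and since $\mathbb{C}_{I}$ and $\mathbb{C}_{I}J$ are orthogonal one has $|f_{I}|^{2}=|F|^{2}+|G|^{2}$ pointwise, so $F$ and $G$ are scalar Hardy functions on the disc. The classical Fatou theorem then gives a.e. radial limits $\hat F,\hat G$, whence $\hat{f_{I}}(e^{It})=\hat F(e^{It})+\hat G(e^{It})J$ exists a.e. on $\mathbb{T}_{I}$; collecting the Fourier coefficients of $F$ and $G$ shows $\hat{f_{I}}(e^{It})=\sum_{k\ge 0}e^{Ikt}a_{k}$, that is, the radial limit carries the same coefficients $a_{n}$ as $f$. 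Consequently $f\mapsto\hat{f_{I}}$ maps $H^{2}(\mathbb{B}_{I})$ into $H_{+}^{2}(\mathbb{T}_{I})$ and, by the two norm computations above, is isometric. Surjectivity is the converse construction: given $g=\sum_{k\ge 0}e^{Ikt}a_{k}\in H_{+}^{2}(\mathbb{T}_{I})$ with $\sum|a_{k}|^{2}<\infty$, the bound on the coefficients gives radius of convergence at least $1$, so $f(q)=\sum_{n\ge 0}q^{n}a_{n}$ is slice regular on $\mathbb{B}$, lies in $H^{2}(\mathbb{B}_{I})$ by the Parseval identity, and has radial limit $g$.

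The main obstacle is bookkeeping the noncommutativity so that Parseval survives: one must keep the coefficients $a_{n}$ on the correct side and exploit that the surviving integrals are real scalars, and must likewise check that the inner product \eqref{inner} on the right $\mathbb{H}$–module $L^{2}(\mathbb{T}_{I})$ renders $\{e^{Ikt}\}_{k\in\mathbb{Z}}$ orthonormal in the appropriate one–sided sense. A second point requiring care is that the a.e. radial limit is obtained slicewise through the splitting lemma rather than directly, so one should confirm that the two complex Hardy functions $F$ and $G$ reassemble to give precisely the boundary series with coefficients $a_{n}$, matching the power–series data used in the isometry.
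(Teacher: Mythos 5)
Your argument is correct, and it shares the paper's key device: the splitting lemma $f_I=F+GJ$ with $J\perp I$, which reduces the existence of a.e.\ radial limits to Fatou's theorem for the complex Hardy functions $F$ and $G$. Where the two proofs diverge is in how the isometry is obtained. The paper works componentwise, invoking the classical identity $\frac{1}{2\pi}\int_0^{2\pi}|\hat F(e^{It})|^2dt=\sup_{0\le r<1}\frac{1}{2\pi}\int_0^{2\pi}|F(re^{It})|^2dt$ for each of $F$ and $G$ and adding the two via $|f_I|^2=|F|^2+|G|^2$; you instead prove a quaternionic Parseval identity directly from the expansion $f_I(re^{It})=\sum_n r^ne^{Int}a_n$, correctly handling the noncommutativity by noting that the surviving integrals $\frac{1}{2\pi}\int_0^{2\pi}e^{I(n-m)t}\,dt=\delta_{mn}$ are real scalars and therefore commute past the constant quaternionic coefficients. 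Your route buys two things the paper's proof leaves implicit: it exhibits the boundary value as the explicit series $\sum_{k\ge 0}e^{Ikt}a_k$, so membership in $H_+^2(\mathbb{T}_I)$ (vanishing of the negative-frequency coefficients) is visible rather than tacit, and it proves surjectivity of $f\mapsto\hat f_I$ by the converse construction from a square-summable coefficient sequence --- the paper's proof establishes only the existence of the radial limit and the norm equality and never addresses onto-ness, even though the statement claims an isomorphism. The only extra cost in your version is justifying term-by-term integration, which your uniform-convergence remark on each circle $|z|=r<1$ covers.
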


%\begin{proof}
%Splitting lemma shows there exists  $ J \in \mathbb S $ with $ J \perp I $ such that
%\[f(re^{It})=F(re^{It})+G(re^{It})J, \]
%where $  F,\ G: \mathbb B_I \to \mathbb C_I$ are holomorphic functions.
%The Hardy space theory on the unit disc of  $ \mathbb C $ shows the existence of the radial limitations of holomorphic function $ F$, i.e.
%\[ \hat{F}(e^{It}):=\lim_{r\to1} F(re^{It}), \]
%as well as the equality
%\[\frac{1}{2\pi}\int_{0}^{2\pi}|\hat F(e^{It})|^2dt=\sup\limits_{0\leqslant r<1}\frac{1}{2\pi}\int_0^{2\pi}|F(re^{It})|^2dt.\]
%Thus, the limit $ \hat{f_I} $ exists, i.e.
%\begin{equation*}
%\begin{aligned}
%\hat{f_I}(e^{It})=&\lim_{r\to1} \big(F(re^{It})+ G(re^{It})J\big)
%\\
%=&\hat F(e^{It})+ \hat G(e^{It})J.
%\end{aligned}
%\end{equation*}
%Moreover, we obtain
%\begin{equation*}
%\begin{aligned}
%&\sup\limits_{0\leqslant r<1}\frac{1}{2\pi}\int_0^{2\pi}|f_I(re^{It})|^2dt
%\\
%&=\sup\limits_{0\leqslant r<1}\frac{1}{2\pi}\int_0^{2\pi}|F(re^{It})|^2dt+\sup\limits_{0\leqslant r<1}\frac{1}{2\pi}\int_0^{2\pi}|G(re^{It})|^2dt.
%\\
%&=\frac{1}{2\pi}\int_0^{2\pi}|\hat F(e^{It})|^2dt+\frac{1}{2\pi}\int_0^{2\pi}|\hat G(e^{It})|^2dt.
%\\
%&=\frac{1}{2\pi}\int_0^{2\pi}|\hat f_I(e^{It})|^2dt.
%\end{aligned}
%\end{equation*}
%Namely,
%\[ \|  f_I\|^2=\frac{1}{2\pi}\int_0^{2\pi}|\hat f_I(e^{It})|^2dt. \]
%\end{proof}

\begin{remark}
For any slice regular function $ f $ and for any $ I, J \in \mathbb S $,
$  f_I \in H^2( \mathbb B_I) $ if and only if  $   f_J \in H^2( \mathbb B_J) $.
\end{remark}

\begin{remark}
The $ L^2(\mathbb T_I)$ space can be expressed as direct sum of the two corresponding Hardy spaces, the latter consisting of boundary limits of well behaved holomorphic functions. Due to this relation, studies of functions of finite energy may use complex analysis methods. This shows the role and importance of Hardy space theory.
%In the theory of signal analysis, functions in $ L^2(\mathbb T_I) $ are complex   signals. Since functions in its closed subspace $ H^2( \mathbb B_I)$ possess non-negative  phase derivative, which is physically realizable signals. Thus, we can apply the holomorphic function theory as tools to deal with the signals which confirms the important role of the Hardy space in the signal processing.
\end{remark}

\section{Slice rational orthogonal system}

The slice Hardy space introduced in this section is equivalent to the definition in \cite{AFS_book}. The reproducing kernel and Blaschke products of the slice Hardy context have also been studied in the book. Thus, our main result in this section is the slice rational orthogonal system $ \{T_k\}_{k\geqslant 1} $, i.e. Theorem \ref{TM_Th}.

\begin{definition}
The slice Hardy space $ H^2(\mathbb B) $ consists of slice regular functions $ f $ defined  in $ \mathbb B $ which satisfies
\begin{eqnarray} \label{def:hardy-101}\|f\|^2:=\frac{1}{4\pi^2}\int_{\partial \mathbb B}\frac{1}{|Im(q)|^2}|f(q)|^2d\sigma(q)<\infty,
\end{eqnarray}
where $ d\sigma $ is the surface area element on $\partial \mathbb B$.
\end{definition}
Based on the slice technique and cylindrical coordinate transformation  \cite{Ghiloni_6}, we can polarize (\ref{def:hardy-101})   to one slice as following:
\begin{equation*}
\begin{aligned}
 \left < f, g \right>:=&\frac{1}{4\pi^2}\int_{T^2}\sin \theta_1d\theta \int_{\partial \mathbb B_{I(\theta)}}\overline{g(x+I(\theta)y)}f(x+I(\theta)y)dxdy
\\
=&\frac{1}{2\pi}\int_{T^2}\sin \theta_1d\theta\frac{1}{2\pi}\int_{0}^{2\pi}\overline{g(e^{I(\theta)t})}f(e^{I(\theta)t})dt
\\
=&\frac{1}{2\pi}\int_{T^2}\sin \theta_1d\theta \left < f_{I(\theta)}, g_{I(\theta)} \right>,
\end{aligned}
\end{equation*}
where $ \theta=(\theta_1, \theta_2)\in T^2:= [0,\pi]^2$ and
$$ I(\theta):=(e_1,e_2,e_3)\varphi(\theta)\in \mathbb S   $$
with
\begin{equation*}
\begin{aligned}
\varphi(\theta)=
&{\left( \begin{array}{l}
\cos \theta_1
\\
\sin \theta_1 \cos \theta_2
\\
\sin \theta_1  \sin \theta_2
\end{array} \right )}.
\end{aligned}
\end{equation*}

Notice that $ H^2(\mathbb B) $ is a reproducing kernel Hilbert space.
For any $ a \in \mathbb B $, define the slice normalized Szeg\"o kernel as
\begin{equation}\label{e_aa}
e_a(q):=e(a,q):=\sqrt{1-|a|^2}(1-q\bar a)^{-*},  \qquad \forall q\in \mathbb B,
\end{equation}
where $ -* $ is the regular reciprocal in Definition \ref{*}.
Since $  (1-q\bar a)^s $ does not have zero points, $e_a$  is  a left slice regular function over $ \mathbb B $.
Besides, the property of $*$-product shows the general conjugation of $ e_a $:
\[  \overline {e_a(q)}=\sqrt{1-|a|^2}(1-a\bar q)^{-*},  \qquad \forall q\in \mathbb B, \]
which is right conjugate slice regular over $ \mathbb B $.
We claim that $ e_a $ is the normalized reproducing kernel of $ H^2(\mathbb B) $. In fact,

\begin{equation}
\begin{aligned}
\left < f_I, (e_a)_I \right>=&\frac{1}{2\pi}\int_{0}^{2\pi}\overline{e_a(e^{It})}f(e^{It})dt
\\
=&\frac{\sqrt{1-|a|^2}}{2\pi}\int_{\partial \mathbb B_I}(1-ae^{-It})^{-*}\big(e^{-It}(-I)de^{It}\big)f(e^{It})
\\
=&\frac{\sqrt{1-|a|^2}}{2\pi}\int_{\partial \mathbb B_I}(1-a\bar q)^{-*}*\bar q(-Ids)f(q)
\\
=&\frac{\sqrt{1-|a|^2}}{2\pi}\int_{\partial \mathbb B_I}(q-a)^{-*}(-Ids)f(q)
\\
=&\sqrt{1-|a|^2}f(a),
\end{aligned}
\end{equation}
where the third equality holds because the function $  g(\bar q)=\bar q $ is a
(right) conjugate slice preserving function so that
 the $*$-product reduces to  the usual  product.
The last equality holds owing to the Cauchy integral formula (i.e. Theorem \ref{Cauchy}).
Furthermore, since $ \sqrt{1-|a|^2}f(a) $ is independent of the imaginary unit of $ q $, we obtain
\[ \left < f, e_a \right>=\sqrt{1-|a|^2}f(a). \]

\begin{remark}
We claim that the operator $ \mathcal S: L^2(  \mathbb T) \rightarrow  L^2(  \mathbb T) $ defined by
\[ \mathcal Sf(q):=\left <f, e(\cdot, q)\right > \]
is an orthogonal projection operator from $ L^2( \mathbb T) $  to  $ H^2( \mathbb B) $, so $ e_a$ with $ a \in \mathbb B $ is actually the Szeg\"o kernel of $ H^2( \mathbb B) $.
In fact, the space decomposition  (\ref{HH--}) shows
\[ f=f^+ +f^-,\]
where $ f^+\in H_+^2( \mathbb T)  $ and $ f^-\in H_-^2( \mathbb T)  $ and then the Cauchy formula shows
\[ \mathcal Sf(q)=f^+(q) \in H^2( \mathbb B) .\]
Besides, the conjugte operator of $ S $ is
\[\mathcal S^*f(q):=\left < f , \overline {e(q, \cdot)} \right >.\]
Since $e(\cdot, q)=\overline {e(q, \cdot)}$, we have
\[\mathcal S^*=\mathcal S.\]
\end{remark}

Denote the class of slice normalized Szeg\"o kernels as:
\[ \mathcal D:=\{e_a\vert a\in \mathbb B\}.\]
\begin{theorem}
$ \mathcal D $ is a dictionary of $ H^2( \mathbb B) $,
i.e.
\[\overline{span}_{\mathbb H}\{e_a\vert a\in \mathbb B\}=H^2( \mathbb B).\]
Here the left-hand-side represents  the closure of the right $\mathbb H$-module linear subspace \\
spanned by finite linear combinations of elements in $ \mathcal D $.
\end{theorem}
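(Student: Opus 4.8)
The plan is to identify $H^2(\mathbb B)$ with a complete right quaternionic Hilbert module and to show that the closed right $\mathbb H$-submodule
\[ M:=\overline{\mathrm{span}}_{\mathbb H}\{e_a\mid a\in\mathbb B\} \]
has trivial orthogonal complement; the orthogonal projection theorem for such modules then forces $M=H^2(\mathbb B)$. The single decisive ingredient is the reproducing identity $\langle f,e_a\rangle=\sqrt{1-|a|^2}\,f(a)$ established just above, which converts an analytic density question into a pointwise vanishing statement.

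First I would record that orthogonality to every kernel $e_a$ is the same as orthogonality to all of $M$. Indeed, from the right linearity $\langle f,g\mu\rangle=\bar\mu\langle f,g\rangle$ (a direct consequence of $\langle f,g\rangle=\overline{\langle g,f\rangle}$ and right linearity in the first slot) together with additivity, the functional $\langle f,\cdot\rangle$ annihilates every finite right-$\mathbb H$-combination $\sum_j e_{a_j}\lambda_j$ as soon as it annihilates each $e_{a_j}$, and continuity of the inner product extends this to the closure $M$. Hence $M^\perp=\{f\in H^2(\mathbb B)\mid \langle f,e_a\rangle=0 \text{ for all }a\in\mathbb B\}$.

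Next I would compute this complement. If $f\in M^\perp$, then $\sqrt{1-|a|^2}\,f(a)=\langle f,e_a\rangle=0$ for every $a\in\mathbb B$. Since $|a|<1$ on the open ball, the scalar factor $\sqrt{1-|a|^2}$ never vanishes, so $f(a)=0$ for all $a\in\mathbb B$, i.e.\ $f\equiv 0$. Thus $M^\perp=\{0\}$, and the orthogonal decomposition $H^2(\mathbb B)=M\oplus M^\perp$ gives $M=H^2(\mathbb B)$, which is exactly the asserted density.

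The point demanding the most care is the functional-analytic scaffolding: one must know that $H^2(\mathbb B)$ with the polarized inner product is a complete right $\mathbb H$-Hilbert module, so that the projection theorem (hence $H=M\oplus M^\perp$ for closed $M$) is available over the noncommutative scalars $\mathbb H$. This is where the quaternionic setting genuinely departs from the classical one, and it is the step I would be most careful to justify or cite. Should one prefer a self-contained argument avoiding the projection theorem, I would instead expand $e_a(q)=\sqrt{1-|a|^2}\sum_{n\ge 0}q^n\bar a^{\,n}$, observe that $\{q^n\}_{n\ge 0}$ is an orthonormal basis of $H^2(\mathbb B)$, and extract each monomial $q^n$ as a norm limit of right-$\mathbb H$-combinations of the $e_a$ (for instance, difference quotients in a real parameter $a=t\in(0,1)$ produce $q$, and higher-order analogues produce the remaining $q^n$); since $M$ is closed, this places every $q^n$ in $M$ and again yields $M=H^2(\mathbb B)$.
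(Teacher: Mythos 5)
Your proposal is correct and follows essentially the same route as the paper: the paper's proof likewise takes $f$ in the orthogonal complement of $\overline{\mathrm{span}}_{\mathbb H}\{e_a\mid a\in\mathbb B\}$, invokes the reproducing identity $\langle f,e_a\rangle=\sqrt{1-|a|^2}\,f(a)$ to conclude $f\equiv 0$, and deduces density. Your write-up simply makes explicit the right-$\mathbb H$-module linearity and the projection theorem that the paper leaves implicit.
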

\begin{proof}
If $ f \in \overline{span}_{\mathbb H}^{\perp}\{e_a\vert a\in \mathbb B\}$, then the reproducing property of $ e_a $ tells us that
$$  f(a) = 0$$ for any $a \in \mathbb B$.  This means $ f=0 $.
\end{proof}

\begin{remark}\label{Unlif}
Notice that  for each $ I\in \mathbb S $,
$$ \mathcal D_{ I} :=\{e_a\vert a\in \mathbb B_I \}  $$
is also a dictionary of $ H^2( \mathbb B) $ as the definition of slice function shows.
However, its slice Takenaka-Malmquist system given by (\ref{TBe}) with coefficients in $ \mathbb B_I $   is not a basis of $ H^2( \mathbb B) $.
\end{remark}

For every $ a \in \mathbb B $, the Blaschke factor (or the M\"obius transfrom) $ B_a  $ is a slice regular function in $ \mathbb B $  defined as

\[B_a(q):= (1-q\bar a)^{-*}*(a-q)\frac{a}{|a|}. \]

\begin{proposition}[\cite{AFS_book}]\label{BB_a}
Let $ a \in \mathbb B $. The Blaschke factor $ B_a $ has the following properties:
\begin{itemize}
\item it takes the unit ball $ \mathbb B $ to itself;
\item it takes the boundary of the unit ball to itself;
\item it has a unique zero point $ a $.
\end{itemize}
\end{proposition}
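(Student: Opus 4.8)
The plan is to trade the $*$-product for an ordinary pointwise product by means of the representation formula (\ref{z_wanw}). Assume $a\neq0$ (the case $a=0$ being trivial), set $f(q)=1-q\bar a$ and $g(q)=(a-q)\frac{a}{|a|}$, so that $B_a=f^{-*}*g$. Since $f^c(q)=1-qa$, formula (\ref{z_wanw}) gives
\[
B_a(q)=f^{-1}(\hat q)\,g(\hat q)=(1-\hat q\,\bar a)^{-1}(a-\hat q)\frac{a}{|a|},\qquad \hat q=(1-qa)^{-1}q\,(1-qa)\in[q].
\]
The one fact I would keep in front of me throughout is that $\hat q\in[q]$, hence $|\hat q|=|q|$. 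Before anything else I would note that $B_a$ is defined on all of $\mathbb B$: the symmetrization $(1-q\bar a)^s$ is a quadratic in $q$ with real coefficients all of whose zeros have modulus $1/|a|>1$, so it does not vanish on $\overline{\mathbb B}$.

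For the first two assertions I would pass to moduli. As $a/|a|$ has unit modulus and $|\hat q|=|q|$, the displayed formula yields
\[
|B_a(q)|=\frac{|a-\hat q|}{|1-\hat q\,\bar a|}.
\]
The computational core is the elementary identity, valid for every $p\in\mathbb H$,
\[
|1-p\bar a|^2-|a-p|^2=(1-|a|^2)(1-|p|^2),
\]
which follows by expanding both squares and using $p\bar a\,a\bar p=|a|^2|p|^2$. Taking $p=\hat q$ and recalling $|\hat q|=|q|$, I read off $|B_a(q)|=1$ exactly when $|q|=1$, so the boundary is preserved, and $|B_a(q)|<1$ when $|q|<1$, so the open ball is carried into itself (both factors on the right being then strictly positive).

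For the third assertion I would locate the zeros. Since $f^{-1}(\hat q)\neq0$ and $a/|a|\neq0$, the formula for $B_a$ shows $B_a(q)=0$ if and only if $\hat q=a$, which in particular forces $[q]=[a]$. Writing $(1-qa)^{-1}q(1-qa)=a$ and clearing the inverse gives $q(1-qa)=(1-qa)a$, that is $q-a=q(q-a)a$. Taking moduli and using $|q|=|a|$ on $[a]$ turns this into $|q-a|=|a|^2\,|q-a|$; as $|a|<1$ this is impossible unless $q=a$. Conversely $\hat a=a$ because $a$ commutes with $1-a^2$, so $B_a(a)=0$. Hence $a$ is the unique zero.

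The main obstacle is not analytic but bookkeeping: controlling the non-commutativity and, above all, recognizing that the transported argument $\hat q$ stays on the sphere $[q]$ and so shares the modulus of $q$. Once that is secured, each step reduces to its one-complex-variable analogue, and the two displayed modulus identities carry the entire proof. For the stronger reading of the first bullet, that $B_a$ maps $\mathbb B$ onto $\mathbb B$, one may either invoke the open mapping property of the non-constant slice regular map $B_a$ together with the boundary behaviour, or exhibit the inverse Blaschke factor by the same construction.
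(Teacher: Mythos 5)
The paper does not prove this proposition at all: it is imported verbatim from the cited reference \cite{CSS_book_1}, so there is no in-paper argument to compare against. Judged on its own, your proof is correct and is essentially the standard one from the slice-regular literature: you use (\ref{z_wanw}) to convert the $*$-quotient into a pointwise quotient evaluated at the transported point $\hat q=f^c(q)^{-1}qf^c(q)\in[q]$, observe that $|\hat q|=|q|$, and then everything collapses to the classical identity $|1-p\bar a|^2-|a-p|^2=(1-|a|^2)(1-|p|^2)$. All the individual steps check out: $(1-q\bar a)^s=1-2\mathrm{Re}(a)q+|a|^2q^2$ indeed has both roots of modulus $1/|a|>1$, so $B_a$ is regular on a neighbourhood of $\overline{\mathbb B}$ (which also justifies applying (\ref{z_wanw}) on the boundary); the modulus identity follows from $p\bar a a\bar p=|a|^2|p|^2$ and the cancellation of the cross terms $a\bar p+p\bar a$; and the uniqueness of the zero is handled correctly, since $\hat q=a$ forces $[q]=[a]$, hence $|q|=|a|$, and then $q-a=q(q-a)a$ gives $|q-a|=|a|^2|q-a|$, impossible for $q\neq a$ when $|a|<1$. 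Two cosmetic remarks: the statement's phrase ``takes $\mathbb B$ to itself'' is most naturally read as ``into,'' which your modulus computation already gives, so the surjectivity discussion at the end is optional; and at $a=0$ the displayed formula for $B_a$ is literally undefined because of the factor $a/|a|$, so ``trivial'' there really means adopting the convention $B_0(q)=q$ (or $-q$), which is worth saying explicitly.
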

A Blaschke product is defined to be the *- product of a finite number of Blaschke factors (also see (\cite{AFS_book}):
\[B_{k}(q):={\prod \limits^{*}}_{j=1}^{k} (1-q\bar a_j)^{-*}*(a_j-q)\frac{a_j}{|a_j|},\]
where $ a_k \in \mathbb B$ for any $ k\in \{1, 2,\cdots\}.$

In the unit ball $ \mathbb B $,  the slice rational orthogonal system, i.e.,  the slice $TM$ system,  consists of weighted Blaschke products, i.e. for any $ k\geqslant 1$,
\begin{equation}\label{TBe}
T_k:=B_{k-1}*e_{a_k}.
\end{equation}

\begin{theorem}\label{TM_Th}
$ \{T_k\}_{k\geqslant 1} $ is an orthonormal system.
\end{theorem}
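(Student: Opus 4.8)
The plan is to deduce orthonormality from three ingredients: the reproducing identity $\langle f,e_a\rangle=\sqrt{1-|a|^2}\,f(a)$ established above, the fact that $B_a$ has its unique zero at $a$ (Proposition \ref{BB_a}), and one genuinely new statement, an \emph{isometry lemma}: for every $a\in\mathbb B$ and all $u,v\in H^2(\mathbb B)$,
\[\langle B_a*u,\,B_a*v\rangle=\langle u,v\rangle.\]
Before using it I would record that the inner product of two slice regular functions sees only their Taylor coefficients: the slice reduction of the norm together with the orthonormality of $\{e^{Int}\}_n$ on $\mathbb T_I$ gives, for $f=\sum_n q^n a_n$ and $g=\sum_n q^n b_n$, the slice-independent formula $\langle f,g\rangle=\sum_n\bar b_n a_n$. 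In particular $\langle B_a*u,B_a*v\rangle=\sum_n\overline{(B_a*v)_n}\,(B_a*u)_n$, which is the quantity the lemma must control.

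Granting the lemma, the theorem is short. Fix $j<k$ and write $B_k=B_j*C$ with $C=B_{a_j}*\cdots*B_{a_{k-1}}$, so that $T_k=B_j*(C*e_{a_k})$. Peeling off $B_j$ one factor at a time by the lemma (using associativity of the $*$-product), I obtain
\[\langle T_j,T_k\rangle=\big\langle B_j*e_{a_j},\,B_j*(C*e_{a_k})\big\rangle=\langle e_{a_j},\,C*e_{a_k}\rangle=\sqrt{1-|a_j|^2}\;\overline{(C*e_{a_k})(a_j)}.\]
Since $C=B_{a_j}*(\cdots)$ and $B_{a_j}(a_j)=0$, the pointwise product rule (\ref{z_wan}) forces $(C*e_{a_k})(a_j)=0$, whence $\langle T_j,T_k\rangle=0$. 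The normalization $\|T_k\|^2=\langle e_{a_k},e_{a_k}\rangle=1$ is the diagonal case of the same lemma, and the range $j>k$ follows by conjugate symmetry of the inner product.

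The whole weight of the argument therefore rests on the isometry lemma, and this is where the non-commutativity of the $*$-product is the main obstacle. I would first reduce to a single factor by associativity of $*$-composition, and then resist the complex-variable reflex of saying ``$|B_a|=1$ on the boundary, so $*$-multiplication is unitary'': the pointwise identity $(B_a*u)(q)=B_a(q)\,u(\tilde q)$ replaces $q$ by the twisted point $\tilde q=B_a(q)^{-1}q\,B_a(q)$, which rotates each boundary sphere $[q]$ off the slice $\mathbb C_I$ and destroys the naive change of variables. Instead I would compute the coefficients of $B_a$ directly. From $(1-q\bar a)^{-*}=\sum_n q^n\bar a^n$ one gets $B_a=\sum_{m\ge 0}q^m\beta_m$ with
\[\beta_0=\frac{a^2}{|a|},\qquad \beta_m=\bar a^{\,m-1}(|a|^2-1)\frac{a}{|a|}\quad(m\ge 1),\]
and by the coefficient formula the lemma is equivalent to the shift relations $\sum_j\bar\beta_j\beta_{j-r}=\delta_{r0}$.

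I expect this last identity to be the crux. The saving grace is that $a$ and $\bar a$ commute, so each sum telescopes as a geometric series: the diagonal is $\sum_j|\beta_j|^2=|a|^2+(1-|a|^2)=1$, while for $r\ge 1$ the single term $\bar\beta_r\beta_0=(|a|^2-1)a^r$ is cancelled exactly by the tail $\sum_{m\ge 1}\bar\beta_{r+m}\beta_m=(1-|a|^2)a^r$. Feeding $\sum_j\bar\beta_j\beta_{j-r}=\delta_{r0}$ into $\langle B_a*u,B_a*v\rangle=\sum_{k,l}\bar v_k\big(\sum_n\bar\beta_{n-k}\beta_{n-l}\big)u_l$ collapses the double sum to $\sum_k\bar v_k u_k=\langle u,v\rangle$. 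The conceptual point worth stressing is that the very quaternionic structure which blocks the boundary argument is exactly what makes these coefficient sums vanish.
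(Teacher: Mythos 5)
Your proposal is correct, but it proves the theorem by a genuinely different route than the paper. The paper works on the boundary: it uses the pointwise factorization (\ref{z_wan}) to write $(B_k*e_{a_k})(e^{It})=B_k(e^{It})\,e_{a_k}(e^{Jt})$ with a $t$-dependent twisted unit $J$, invokes $|B_k|=1$ on $\mathbb T$ from Proposition \ref{BB_a}, and then passes by a change of variables to the Cauchy integral formula to land on $\sqrt{1-|a_l|^2}\,g(a_l)$, which vanishes because $B_{a_l}(a_l)=0$. You instead prove the isometry $\langle B_a*u,B_a*v\rangle=\langle u,v\rangle$ purely at the level of Taylor coefficients, by computing $\beta_0=a^2/|a|$, $\beta_m=\bar a^{\,m-1}(|a|^2-1)a/|a|$ and verifying the shift relations $\sum_j\bar\beta_j\beta_{j-r}=\delta_{r0}$ (your computation checks out; note you also need the reversed identity $\sum_j\bar\beta_{j-r}\beta_j=\delta_{r0}$ for the case $k<l$ in the double sum, but that follows by quaternionic conjugation since $\delta_{r0}$ is real and hence central). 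The endgame is then the same as the paper's: reproducing property plus the zero of $B_{a_j}$ at $a_j$. What your route buys is that it sidesteps exactly the step you flagged: in the paper's change of variables the unit $J$ varies with $t$, so the image of $\mathbb T_I$ under $t\mapsto e^{Jt}$ is not a circle in a single slice, and reducing the resulting integral to a Cauchy integral over $\partial\mathbb B_J$ deserves more justification than the paper gives it; your coefficient argument is self-contained and makes the isometry of $*$-multiplication by a Blaschke factor (which the paper also silently reuses in Lemma \ref{frT}) an explicit, checkable identity. The cost is that the computation is tied to the explicit power series of $B_a$ on the ball and would have to be redone for the half-plane version in Section 7, where the paper's boundary argument transfers more directly.
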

\begin{proof}
By definition, for any $ k\geqslant 1 $ and $ I \in \mathbb S $,
\begin{equation}\label{TT_kk}
 \left < (T_k)_I, (T_k)_I \right>=\frac{1}{2\pi}\int_{0}^{2\pi}\overline{B_{k-1}*e_k(e^{It})}B_{k-1}*e_k(e^{It})dt.
\end{equation}
Equation (\ref{z_wan}) shows
\[ B_{k-1}*e_k(e^{It})=B_{k-1}(e^{It})e_k(e^{Jt}),\]
where $ J\in \mathbb S $ such that  $ e^{Jt}= B_k^{-1}(e^{It})e^{It}B_k(e^{It}) $. So equation (\ref{TT_kk}) becomes
\begin{equation*}
 \left < (T_k)_I, (T_k)_I \right>=\frac{1}{2\pi}\int_{0}^{2\pi}\overline{e_k( e^{Jt})}\ \overline{ B_{k-1}(e^{It}) }B_{k-1}(e^{It})e_k( e^{Jt})dt.
\end{equation*}
Proposition \ref{BB_a}  implies that
\[ \overline{ B_{k-1}(e^{It}) }B_{k-1}(e^{It})=|B_{k-1}(e^{It})|^2=1.\]
By change of variables, the  Cauchy integral formula shows
\begin{equation*}
\begin{aligned}
 \left < (T_k)_I, (T_k)_I \right>=&\frac{1}{2\pi}\int_{0}^{2\pi}\overline{e_k(e^{Jt})}e_k(e^{Jt})dt
\\
=&\frac{\sqrt{1-|a_k|^2}}{2\pi}\int_{\partial \mathbb B_J}(q-a_k)^{-*}(-Jds)e_k(q)
\\
=&\sqrt{1-|a_k|^2}e_k(a_k)
\\
=&1.
\end{aligned}
\end{equation*}
Since the result is independent of imaginary $ I $,  we obtain
$$ \left < T_k, T_k\right>=1.$$

Now we consider the case of different indices where  $1\leqslant l < k$
\begin{equation}\label{TT_kl}
 \left < (T_k)_I, (T_l)_I \right>=\frac{1}{2\pi}\int_{0}^{2\pi}\overline{B_{l-1}*e_l(e^{It})}B_{k-1}*e_k(e^{It})dt.
 \end{equation}
There we  reformulate $ B_{k-1}*e_k $ as
\begin{equation*}
B_{k-1}*e_k:=B_{l-1}*g,
\end{equation*}
where
\[ g(q)=\big({\prod \limits^{*}}_{j=l}^{k-1} (1-q\bar a_j)^{-*}*(a_j-q)\frac{a_j}{|a_j|}\Big)*e_k(q).\]
As before,
\begin{equation}
B_{l-1}*g(e^{It})=B_{l-1}(e^{It})g(e^{Kt}),
\end{equation}
where $ K \in \mathbb S $ such that  $ e^{Kt}= B_l^{-1}(e^{It})e^{It}B_l(e^{It}) $. Notice that
\[ B_{l-1}*e_l(e^{It})=B_{l-1}(e^{It})e_l(e^{Kt}). \]
Then  equation (\ref{TT_kl}) becomes
\[  \left < (T_k)_I, (T_l)_I \right>=\frac{1}{2\pi}\int_{0}^{2\pi}\overline{e_l( e^{Kt})}\ \overline{ B_{l-1}(e^{It}) }B_{l-1}(e^{It})g( e^{Kt})dt. \]
Similarly, we have
\[ |B_{l-1}(e^{It})|^2=1,\]
Again by the change of variables, we apply the Cauchy integral theorem to get

\begin{equation*}
\begin{aligned}
 \left < (T_k)_I, (T_l)_I \right>
=&\frac{1}{2\pi}\int_{0}^{2\pi}\overline{e_l( e^{Kt})}g( e^{Kt})dt
\\
=&\frac{\sqrt{1-|a|^2}}{2\pi}\int_{\partial \mathbb B_K}(q-a_l)^{-*}(-Kds)g(q)
\\
=&\sqrt{1-|a|^2}g(a_l)
\\
=&0,
\end{aligned}
\end{equation*}
which deduces that
\[\langle T_k, T_l \rangle=0.\]
This completes the   proof.
\end{proof}

\section{Slice Adaptive Fourier Decomposition}
We have known that  the slice Hardy space $ H^2(\mathbb B) $ is a reproducing kernel Hilbert space and   $ T_k$  for any $ k\geqslant1 $ is  a slice rational orthogonal system  in the unit ball $ \mathbb B $.

In this section, we intend to adaptively decompose functions in the slice Hardy space into the subspace spanned by $\{T_k\}_{k\ge 1}$ as following:
For any $ f \in H^2(\mathbb B) $ and $ a_1 \in \mathbb B $, there is an equality
\[ f(q)=e_{a_1}(q)\left<f,e_{a_1}\right>+B_{a_1}*\mathcal S_{a_1}f, \]
where
\[ \mathcal S_{a_1}f:=B_{a_1}^{-*}*\big(f(q)-e_{a_1}(q)\left<f,e_{a_1}\right>\big). \]

Denote
\[ r_2(q):=f(q)-e_{a_1}(q)\left<f,e_{a_1}\right>\]
 as the standard remainder and
\[ f_2(q):=S_{a_1}f \]
as the reduced remainder.
By setting $ f_1=f $, we have
\[ f_1(q)=e_{a_1}(q)\left<f_1,e_{a_1}\right>+B_{a_1}*f_2(q). \]

Notice that $ a_1$ is a removable singularity of $ r_2 $. This is because   $ a_1 $ is a common zero point of function $ r_2 $ and  Blaschke factor  $ B_{a_1} $. In fact, Proposition \ref{BB_a} shows $ B_{a_1} $ has the unique zero $ a_1 $. Hence, $ f_2 $ is a slice regular function in $\mathbb B $.
Meanwhile, by the right $\mathbb H$-linear properties of the inner product $ \left<\cdot,\cdot \right>$, we have
\begin{equation}\label{sumre}
\begin{aligned}
\left<e_{a_1}\left<f_1,e_{a_1}\right>,r_2 \right>=&\left<e_{a_1}\left<f_1,e_{a_1}\right>,f_1-e_{a_1}\left<f_1,e_{a_1}\right>\right>
\\
=&\left<e_{a_1}\left<f_1,e_{a_1}\right>,f_1\right>-\left<e_{a_1}\left<f_1,e_{a_1}\right>,e_{a_1}\left<f_1,e_{a_1}\right>\right>
\\
=&\left<e_{a_1},f_1\right>\left<f_1,e_{a_1}\right>-\overline{\left<f_1,e_{a_1}\right>}\left<e_{a_1},e_{a_1}\right>\left<f_1,e_{a_1}\right>
\\
=&0.
\end{aligned}
\end{equation}
This deduces that
\[\|f_2(q)\|^2=\|r_2(q)\|^2=\|f_1(q)\|^2-|\left<f_1,e_{a_1}\right>|^2< + \infty,\]
where the first equality holds as shown in  the proof of Theorem \ref{TM_Th}.
Thus, we have  $ f_2\in H^2(\mathbb B) $.

Now we can apply the   iterative process:
\begin{equation}\label{Ite}
\begin{aligned}
f_1(q)=&e_{a_1}(q)\left<f_1,e_{a_1}\right>+B_{a_1}*f_2(q)
\\
=&e_{a_1}(q)\left<f_1,e_{a_1}\right>+B_{a_1}*(e_{a_2}(q)\left<f_2,e_{a_2}\right>+B_{a_2}*f_3(q))
\\
=&T_1(q)\left<f_1,e_{a_1}\right>+T_2(q)\left<f_2,e_{a_2}\right>+B_{a_1}*B_{a_2}*f_3(q)
\\
=&T_1(q)\left<f_1,e_{a_1}\right>+T_2(q)\left<f_2,e_{a_2}\right>+B_{a_1}*B_{a_2}*(e_{a_3}(q)\left<f_3,e_{a_3}\right>+B_{a_3}*f_4(q))
\\
=&T_1(q)\left<f_1,e_{a_1}\right>+T_2(q)\left<f_2,e_{a_2}\right>+T_3(q)\left<f_3,e_{a_3}\right>+B_{a_1}*B_{a_2}*B_{a_3}*f_4(q)
\\
=&\cdots
\end{aligned}
\end{equation}

Theorem \ref{TM_Th} shows that in the decomposition (\ref{def:TM-102}) the first $n$ terms are orthogonal to each other, so we just need to show the orthogonality between each of the $n$ summed terms and the remainder term. This can be down by following the same method as in the proof of Theorem \ref{TM_Th}.

The orthogonality implies the following energy equality:
\begin{equation}\label{Itenorm}
\|f_1(q)\|^2=|\left<f_1,e_{a_1}\right>|^2+|\left<f_2,e_{a_2}\right>|^2+\cdots +|\left<f_k,e_{a _k}\right>|^2+ \|f_{k+1}(q)\|^2
\end{equation}

Now we have had  a decomposition of  $ f \in H^2(\mathbb B ) $. We want to know that whether it is convergent and how fast it converges as $k\to\infty$. Clearly, the answer relies on the choice of $ a_n $. Our purpose is to find at every $n$-th step a suitable parameter $ a_n $ such that the corresponding normalized Szeg\"o kernel $e_{a_n}$ extracts out the largest possible energy portion from the reduced remainder $f_n.$ The premise is that the maximal choice must exist.

\begin{theorem}[maximum selection principle]\label{max-102}
For any $ f\in H^2(\mathbb B ) $, there exists an element $ a \in \mathbb B $ such that
\[ |\left<f,e_a\right>|=\max\limits_{b\in\mathbb B}|\left<f,e_b\right>|.\]
\end{theorem}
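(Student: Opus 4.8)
The plan is to show that the quantity $M:=\sup_{b\in\mathbb B}|\langle f,e_b\rangle|$ is finite and actually attained, the only genuine danger being that a maximizing sequence escapes toward the boundary $|b|\to 1$. The starting point is the reproducing identity $\langle f,e_b\rangle=\sqrt{1-|b|^2}\,f(b)$ established above, which exhibits $b\mapsto\langle f,e_b\rangle$ as a continuous $\mathbb H$-valued function on $\mathbb B$ (the scalar factor $\sqrt{1-|b|^2}$ is continuous and $f$, being slice regular, is continuous). Hence $b\mapsto|\langle f,e_b\rangle|$ is continuous and real-valued. Since $\|e_b\|=1$ (as computed in the proof of Theorem \ref{TM_Th}, where $\sqrt{1-|a|^2}\,e_a(a)=1$), the Cauchy--Schwarz inequality \eqref{C-S} gives $|\langle f,e_b\rangle|\le\|f\|$, so $M\le\|f\|<\infty$. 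If $M=0$ then $f$ vanishes identically by the reproducing property and any $a\in\mathbb B$ works; so I may assume $M>0$.

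The decisive step is to prove the boundary decay
\[ \lim_{r\to 1^-}\ \sup_{r\le|b|<1}|\langle f,e_b\rangle|=0. \]
For this I would approximate $f$ by polynomials: from the power series $f=\sum_{n\ge0}q^n a_n$ with $\sum|a_n|^2<\infty$ supplied by the isometry of Theorem \ref{H2}, the partial sums converge in norm, so given $\epsilon>0$ there is a slice regular polynomial $g=\sum_{n=0}^N q^n a_n$ with $\|f-g\|<\epsilon$. Then, again using $\|e_b\|=1$ on the first summand,
\[ |\langle f,e_b\rangle|\le|\langle f-g,e_b\rangle|+|\langle g,e_b\rangle|\le\epsilon+\sqrt{1-|b|^2}\,|g(b)|. \]
Since $g$ is continuous on the compact set $\overline{\mathbb B}$, it is bounded there, so the second summand is at most $\sqrt{1-|b|^2}\,\|g\|_{\infty}\to 0$ as $|b|\to 1$. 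Thus $\limsup_{|b|\to 1}|\langle f,e_b\rangle|\le\epsilon$, and letting $\epsilon\to 0$ yields the claimed decay.

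With the decay established the conclusion is routine. Applying the previous step with $\epsilon=M/2$ furnishes an $r<1$ such that $|\langle f,e_b\rangle|\le M/2<M$ whenever $|b|\ge r$; consequently
\[ M=\sup_{|b|\le r}|\langle f,e_b\rangle|. \]
The closed sub-ball $\{b:|b|\le r\}$ is compact and $b\mapsto|\langle f,e_b\rangle|$ is continuous on it, so by the extreme value theorem the supremum is attained at some $a$ with $|a|\le r<1$, that is $a\in\mathbb B$, giving the desired maximizer.

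I expect the boundary decay estimate to be the only real obstacle: it is precisely what prevents a maximizing sequence from running off to $\partial\mathbb B$, and it rests on the density of slice regular polynomials in $H^2(\mathbb B)$ together with their boundedness on $\overline{\mathbb B}$. The subsequent compactness argument is standard. The one point deserving care is that the $\mathbb H$-valued inner product still satisfies Cauchy--Schwarz and that $\|e_b\|=1$ uniformly in $b$, both of which are available from the earlier development.
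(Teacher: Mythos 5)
Your proposal is correct and follows essentially the same route as the paper: both reduce the problem to the boundary decay $|\langle f,e_b\rangle|\to 0$ as $|b|\to 1$, proved by approximating $f$ in norm by a polynomial (via Theorem \ref{H2}), splitting the inner product, and using Cauchy--Schwarz together with $\sqrt{1-|b|^2}\,|g(b)|\le C\sqrt{1-|b|^2}$. You merely make explicit the continuity, the $\|e_b\|=1$ normalization, and the final compactness/extreme-value step that the paper leaves implicit.
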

\begin{proof}
We only need to prove that
\[\lim_{|a|\to1}|\left<f,e_a\right>|=\sqrt{1-|a|^2}|f(a)|=0.\]
In fact,  Theorem \ref{H2} implies that there exists a polynomial $g$ defined in the closure of $ \mathbb B $ such that
\[ \|f-g\|<\frac{\varepsilon}{2}.\]
The inner product is then divided into two parts:
\[\left<f,e_a\right>=\left<f-g,e_a\right>+\left<g,e_a\right>.\]
 The Cauchy-Schwarz inequality (\ref{C-S}) implies
\[\vert\left<f-g,e_a\right>\vert\leqslant \|f-g\| < \frac{\varepsilon}{2}.\]
Now let $ C $ be any but fixed bound of $g$ on the closed unit disc. When $|a|$ is sufficiently close to $1,$ there follows
\[|\left<g,e_a\right>|=\sqrt{1-|a|^2}|g(a)|\leqslant C\sqrt{1-|a|^2}<\frac{\varepsilon}{2}.\]
Combining the above two estimates the proof is complete.

\end{proof}

\begin{lemma}\label{frT}
 With the notation  $ f_n,\ r_n, \ T_n, e_{a_n} $ defined in the text for $ n\geqslant 1 $ there hold
\[\left<f_n,e_{a_n}\right>=\left<r_n,T_n\right>=\left<f,T_n\right>.\]
\end{lemma}
\begin{proof}
For the first equality, recall that
\[ r_n=B_{n-1}*f_n, \quad  T_n=B_{n-1}\sqrt{}*e_n.\]
Following the proof of Theorem \ref{TM_Th}, we obtain
\begin{equation*}
\begin{aligned}
 \left<r_n,T_n\right>=&\left<B_{n-1}*f_n,B_{n-1}*e_n\right>
 \\
 =&\left<f_n,e_{a_n}\right>.
\end{aligned}
\end{equation*}
For the second equation, Theorem \ref{TM_Th} shows
\[ \left<T_k, T_n\right> = 0,  \quad 1\leqslant k<n.\]
Hence, the iteration formula (\ref{Ite}) deduce
\begin{equation*}
\left<f, T_n\right>=\left<r_n,T_n\right>.
\end{equation*}
\end{proof}

\begin{theorem}
Let $ f\in H^2(\mathbb B). $ If for every $ n\geqslant 1 $, the parameters $ a_n $ is chosen according to the maximal selection principle in Theorem \ref{max-102}, then
\[ f=\sum\limits_{n=1}^{\infty}T_n\left<f_n,e_{a_n}\right>=
\sum\limits_{n=1}^{\infty}T_n\left<f,T_n\right>.\]
\end{theorem}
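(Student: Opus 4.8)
The plan is to read the finite decomposition off the iteration (\ref{Ite}), namely $f = S_n + B_{n+1}*f_{n+1}$ with partial sum $S_n = \sum_{k=1}^{n} T_k\langle f_k, e_{a_k}\rangle$ and $B_{n+1} = B_{a_1}*\cdots*B_{a_n}$, and then to prove that the error tends to zero. First I would invoke the norm identity (\ref{Itenorm}), which gives $\sum_{k\ge 1}|\langle f_k, e_{a_k}\rangle|^2 \le \|f\|^2 < \infty$. By the orthonormality of the slice Takenaka--Malmquist system (Theorem \ref{TM_Th}) this square-summability makes $\{S_n\}$ a Cauchy sequence in $H^2(\mathbb B)$, since $\|S_m - S_n\|^2 = \sum_{k=n+1}^{m}|\langle f_k, e_{a_k}\rangle|^2$; hence $S_n \to S$ for some $S \in H^2(\mathbb B)$. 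It then remains to identify $S$ with $f$, and because $H^2(\mathbb B)$ is a reproducing kernel space (with $\|e_b\|=1$), it suffices to show that $(f-S_n)(b) \to 0$ for every fixed $b \in \mathbb B$.

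The decisive input is the maximum selection principle. Square-summability forces $\langle f_n, e_{a_n}\rangle \to 0$, and by Theorem \ref{max-102} together with the greedy choice of $a_n$ we have $|\langle f_n, e_{a_n}\rangle| = \max_{b\in\mathbb B}|\langle f_n, e_b\rangle|$. Using the reproducing identity $\langle f_n, e_b\rangle = \sqrt{1-|b|^2}\,f_n(b)$, this says precisely that
\[ \sup_{b\in\mathbb B}\sqrt{1-|b|^2}\,|f_n(b)| \longrightarrow 0 \qquad (n\to\infty). \]
To convert this uniform decay into pointwise decay of the error, I would evaluate $f - S_n = B_{n+1}*f_{n+1}$ at a fixed point $b$ by means of the product formula (\ref{z_wan}): $(B_{n+1}*f_{n+1})(b)$ equals $B_{n+1}(b)\,f_{n+1}(\tilde b)$ for some $\tilde b \in [b]$ (and vanishes when $B_{n+1}(b)=0$), so in particular $|\tilde b| = |b|$. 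Since $B_{n+1}$ is a Blaschke product, Proposition \ref{BB_a} combined with (\ref{z_wan}) gives $|B_{n+1}(b)| \le 1$ on $\mathbb B$, whence
\[ |(f-S_n)(b)| \le |f_{n+1}(\tilde b)| = \frac{|\langle f_{n+1}, e_{\tilde b}\rangle|}{\sqrt{1-|b|^2}} \le \frac{1}{\sqrt{1-|b|^2}}\sup_{c\in\mathbb B}|\langle f_{n+1}, e_c\rangle| \longrightarrow 0, \]
where the middle equality uses $|\tilde b| = |b|$, so the Szeg\"o normalization $\sqrt{1-|\tilde b|^2} = \sqrt{1-|b|^2}$ is unchanged. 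Combining this with the pointwise convergence $S_n(b) \to S(b)$ (a consequence of $S_n \to S$ in the reproducing kernel space) yields $(f-S)(b) = 0$ for every $b$, i.e. $f = S = \sum_{n\ge 1}T_n\langle f_n, e_{a_n}\rangle$. The second stated equality is then immediate from Lemma \ref{frT}, which identifies $\langle f_n, e_{a_n}\rangle = \langle f, T_n\rangle$.

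I expect the \emph{main obstacle} to be exactly this passage from the vanishing of the maximal coefficient to the vanishing of the remainder. A standard greedy-algorithm argument would extract a weakly convergent subsequence of $\{f_n\}$ and appeal to weak lower semicontinuity of the norm, but weak convergence alone does not rule out $\|f_n\| \to d > 0$. The advantage of the route above is that the reproducing kernel structure allows me to argue directly at each point $b$, and the only subtlety is that the $*$-product sends evaluation at $b$ to evaluation of $f_{n+1}$ at the companion point $\tilde b$; this is harmless precisely because $\tilde b$ lies in the symmetry class $[b]$ and hence shares the modulus of $b$, keeping the uniform bound applicable. The one auxiliary estimate I would record carefully before assembling the final limit is the bound $|B_{n+1}| \le 1$ on $\mathbb B$, deduced from the single-factor statement of Proposition \ref{BB_a} through the multiplicative formula (\ref{z_wan}).
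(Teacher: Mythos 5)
Your proposal is correct and follows essentially the route the paper intends: the paper's own ``proof'' is only a pointer to the classical argument of Qian--Wang (Theorem 2.2 of \cite{Qian_1}), claiming it translates word for word, and your argument is precisely that translation carried out in full --- norm convergence of the partial sums from orthonormality and (\ref{Itenorm}), then identification of the limit with $f$ via pointwise evaluation, using that the greedy choice makes $\sup_{b}\lvert\langle f_n,e_b\rangle\rvert=\lvert\langle f_n,e_{a_n}\rangle\rvert\to 0$. The only genuinely quaternionic points --- that $(B_{n+1}*f_{n+1})(b)=B_{n+1}(b)f_{n+1}(\tilde b)$ with $\tilde b\in[b]$ so that $\lvert\tilde b\rvert=\lvert b\rvert$ keeps the Szeg\H{o} normalization intact, and that $\lvert B_{n+1}\rvert\leqslant 1$ on $\mathbb B$ via (\ref{z_wan}) and Proposition \ref{BB_a} --- are exactly the details the paper glosses over, and you handle them correctly.
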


\begin{proof}
In the last two sections, we have introduced the slice Hardy space $ H^2(\mathbb B) $ of  the slice regular functions over the quaternion field. When equipped with the inner product $\left<\cdot, \cdot \right>$, it is a  quaternion Hilbert space. By virtue of the Cauchy formula, we obtained  the slice normalized Szeg\"o kernel $ \{e_a\}_{a\in \mathbb B} $.
Together with $ e_a$, the slice Blaschke products and $*$-product, we established the theory of TM systems $ \{T_k\}_{k\ge1} $ in the slice regular Hardy space. With these preparations and the maximum selection principle we can translate the proof of Theorem 2.2 in \cite{Qian_1} word by word to get the counterpart convergence result in the slice regular Hardy space context.
\end{proof}

\section{The convergence rate}
In this section, we prove a convergence rate result for slice  adaptive Fourier decomposition. We consider the convergence rate issue in a subclass of $H^2(\mathbb B) $, defined by:
\[ H^2(\mathcal D, M):= \{ f\in H^2(\mathbb B):f=\sum \limits_{k=1}^{\infty}  c_ke_{b_k}, \ e_{b_k}\in \mathcal D, \
\sum \limits_{k=1}^{\infty} |c_k| \leqslant M \}, \]
where $ \mathcal D $ is the dictionary consisting of the slice normalized Szeg\"{o} kernels and $ M $ is a positive constant.

\begin{lemma}
If $ f\in H^2(\mathcal D, M) $, then $ \|f\|\leqslant M $.
\end{lemma}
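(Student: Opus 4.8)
The plan is to bound the norm of $f$ by exploiting the representation $f=\sum_{k=1}^\infty c_k e_{b_k}$ together with the triangle inequality for the norm on $H^2(\mathbb B)$. First I would recall that each $e_{b_k}$ is the \emph{normalized} Szeg\"o kernel, so that $\|e_{b_k}\|=1$ for every $k$; this follows from the reproducing property $\langle f,e_a\rangle=\sqrt{1-|a|^2}f(a)$ established earlier, applied with $f=e_{b_k}$, exactly as in the computation $\langle T_k,T_k\rangle=1$ in the proof of Theorem \ref{TM_Th}. With this normalization in hand, the estimate is essentially immediate.

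The key steps, in order, are the following. Given $f=\sum_{k=1}^\infty c_k e_{b_k}$ with $\sum_{k=1}^\infty |c_k|\le M$, I would apply the triangle inequality to the (convergent) series, obtaining
\[
\|f\|=\Big\|\sum_{k=1}^\infty c_k e_{b_k}\Big\|\le \sum_{k=1}^\infty \|c_k e_{b_k}\|.
\]
Next I would use homogeneity of the norm over the right $\mathbb H$-module structure, namely $\|c_k e_{b_k}\|=|c_k|\,\|e_{b_k}\|$, which holds because the norm arises from the quaternionic inner product and $c_k\in\mathbb H$ acts as a scalar. Combining this with $\|e_{b_k}\|=1$ gives
\[
\|f\|\le \sum_{k=1}^\infty |c_k|\,\|e_{b_k}\|=\sum_{k=1}^\infty |c_k|\le M,
\]
which is the desired conclusion.

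The only point requiring care, and the main (mild) obstacle, is justifying that the triangle inequality and the scalar-homogeneity pass correctly through the quaternionic module setting and that the partial sums converge in norm so that $\|\cdot\|$ may be exchanged with the infinite sum. I would address this by noting that $H^2(\mathbb B)$ is a right $\mathbb H$-module inner product space (as recorded in Section 3), so the norm satisfies the triangle inequality and $\|x\lambda\|=\|x\|\,|\lambda|$ for $x\in H^2(\mathbb B)$ and $\lambda\in\mathbb H$; the convergence of $\sum_k c_k e_{b_k}$ in $H^2(\mathbb B)$ is part of the hypothesis defining membership in $H^2(\mathcal D,M)$, and continuity of the norm then lets me pass to the limit. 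No genuinely hard computation is involved: the result is a one-line consequence of $\|e_{b_k}\|=1$ and the absolute summability of the coefficients.
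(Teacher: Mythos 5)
Your proof is correct, but it takes a different (and slightly more elementary) route than the paper's. You bound $\|f\|$ directly by the triangle inequality, $\|f\|\le\sum_k|c_k|\,\|e_{b_k}\|=\sum_k|c_k|\le M$, using the normalization $\|e_{b_k}\|=1$. The paper instead runs a duality argument: it writes $\|f\|^2=|\langle f,f\rangle|$, expands the second slot as the series defining $f$, and applies the Cauchy--Schwarz inequality (\ref{C-S}) termwise to obtain $\|f\|^2\le\|f\|\sum_k|c_k|\le M\|f\|$, then cancels a factor of $\|f\|$. Both arguments rest on exactly the same two facts --- norm convergence of the defining series (so that the norm, respectively the inner product, can be exchanged with the infinite sum) and the fact that the Szeg\"o kernels in $\mathcal D$ are normalized --- so neither is more demanding than the other. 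Your version has the small advantage of not needing to divide by $\|f\|$ (hence no separate remark when $f=0$), while the paper's version matches the pattern it reuses later in the convergence-rate proof, where $|\langle r_m,f\rangle|$ is estimated in the same way. One cosmetic caveat: in a right $\mathbb H$-module the scalars should sit on the right, $e_{b_k}c_k$, for the homogeneity $\|e_{b_k}c_k\|=\|e_{b_k}\|\,|c_k|$ to follow from the inner-product axioms as you claim, via $\langle x\lambda,x\lambda\rangle=\bar\lambda\langle x,x\rangle\lambda=|\lambda|^2\|x\|^2$; the paper itself is inconsistent on this point, writing $c_ke_{b_k}$ in the definition preceding this lemma but $e_{b_k}c_k$ in the convergence-rate theorem, and your argument should be read with the coefficients on the right.
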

\begin{proof}
Since $ f\in H^2(\mathcal D, M) $, there exist a quaternion series $ \{c_k\}_{k\geqslant1} $ and a function series  $ \{ e_{b_k}\}_{k\geqslant1}  \in \mathcal D $ such that
\[ f=\sum_{k=1}^{\infty}c_ke_{b_k}, \quad \textit{with} \quad \sum_{k=1}^{\infty}|c_k|\leqslant M. \]
Thus we obtain
\begin{equation*}
\begin{aligned}
\|f\|^2=&|\left <f, f\right >|
\\
=&\left|\left<f, \sum_{k=1}^{\infty}c_ke_{b_k}\right>\right|
\\
\leqslant& \sum_{k=1}^{\infty}|c_k||\left<f, e_{b_k}\right>|
\\
\leqslant& \|f\| \sum_{k=1}^{\infty}|c_k|
\\
\leqslant& M\|f\|,
\end{aligned}
\end{equation*}
where the second inequality holds due to the Cauchy-Schwarz inequality (\ref{C-S}).
\end{proof}

\begin{lemma}[\cite{Devore_1}]\label{ser_2}
Let $ A $ be a positive constant and $ \{d_m\}_{m=1}^{\infty} $ be a series of non-negative numbers satisfying
\[d_1\leqslant A, \quad d_{m+1}\leqslant d_m(1-\frac{d_m}{A}),\]
then for every positive integer $ m $, we have
\[ d_m \leqslant \frac{A}{m}.\]
\end{lemma}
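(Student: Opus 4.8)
The plan is to prove the bound $d_m \le A/m$ by induction on $m$, with the engine of the induction being the recursion $d_{m+1} \le \phi(d_m)$ where $\phi(x) := x(1 - x/A)$. The base case $m=1$ is immediate from the hypothesis, since $d_1 \le A = A/1$. Before running the inductive step I would record the elementary calculus of $\phi$ on $[0,A]$: it is a downward parabola with maximum value $\phi(A/2) = A/4$, and it is strictly increasing on the branch $[0, A/2]$. This monotonicity is exactly what makes the estimate propagate.

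For the inductive step, assume $d_m \le A/m$ with $m \ge 2$. Then $A/m \le A/2$, so $d_m$ lies in the increasing branch of $\phi$, and hence $d_{m+1} \le \phi(d_m) \le \phi(A/m) = \frac{A}{m}\left(1 - \frac{1}{m}\right) = A\frac{m-1}{m^2}$. It remains only to verify the purely numerical inequality $\frac{m-1}{m^2} \le \frac{1}{m+1}$, which reduces to $m^2 - 1 \le m^2$ and therefore always holds; this gives $d_{m+1} \le A/(m+1)$ and closes the induction for $m \ge 2$.

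The one genuine subtlety, and the step I expect to require the most care, is the passage from $m=1$ to $m=2$. Here $d_1$ is only known to lie in $[0,A]$, which includes the decreasing branch of $\phi$, so I cannot simply evaluate $\phi$ at the endpoint bound $d_1 = A$. Instead I would invoke the global maximum: $d_2 \le \phi(d_1) \le \max_{[0,A]}\phi = A/4$, which is at most $A/2$. This establishes the required bound at $m=2$ and simultaneously places $d_2$ in the increasing branch, so that the clean induction above takes over for all larger indices.

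As an alternative that avoids any case distinction, I would keep in reserve a telescoping estimate on reciprocals. Assuming momentarily that $0 < d_m < A$, the recursion gives $\frac{1}{d_{m+1}} \ge \frac{1}{d_m}\cdot\frac{1}{1 - d_m/A} \ge \frac{1}{d_m} + \frac{1}{A}$, where the second inequality uses $(1-x)^{-1} \ge 1 + x$ for $x \in [0,1)$; summing this from $1$ to $m-1$ and using $\frac{1}{d_1} \ge \frac{1}{A}$ yields $\frac{1}{d_m} \ge \frac{m}{A}$, that is, $d_m \le A/m$. The degenerate cases are harmless and would be dispatched first: if $d_1 = A$ then $d_2 = 0$, and if any $d_m = 0$ then all later terms vanish, so in either situation the bound holds trivially.
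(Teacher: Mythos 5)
Your proof is correct. Note that the paper itself offers no argument for this lemma: it is stated with a citation to DeVore--Temlyakov and used as a black box, so there is no proof of record to compare against; your write-up supplies exactly what the paper omits. The induction is sound: the base case is immediate, the step for $m\ge 2$ correctly exploits that $\phi(x)=x(1-x/A)$ is increasing on $[0,A/2]$ together with the numerical inequality $\frac{m-1}{m^{2}}\le\frac{1}{m+1}$, and you rightly isolate the only delicate point, the passage from $m=1$ to $m=2$, where $d_{1}$ may sit on the decreasing branch; bounding $d_{2}\le\max_{[0,A]}\phi=A/4$ disposes of it. This is essentially the classical DeVore--Temlyakov argument. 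The reciprocal telescoping you keep in reserve, $\frac{1}{d_{m+1}}\ge\frac{1}{d_{m}}+\frac{1}{A}$ via $(1-x)^{-1}\ge 1+x$, is also valid once the degenerate cases $d_{m}=0$ and $d_{1}=A$ are split off as you do, and it is arguably cleaner since it avoids the branch analysis entirely. Either version would serve as a complete proof.
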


Now we can show that the   convergence rate is about $O(m^{-1/2})$ in the space $ H^2(\mathcal D, M) $.
\begin{theorem}
If $ f \in H^2(\mathcal D, M) $, then
\[ \| r_m\| \leqslant \frac{M}{\sqrt{m}}.\]
\end{theorem}
\begin{proof}
 The proof follows the same route as for the classical complex Hardy space. However, we write down the proof since there are details which make it not a direct translation.

Since  $ f\in H^2(\mathcal D, M) $,   there exist $ \{c_k\}_{k\geqslant1}\in \mathbb H $ and
$ \{b_k\}_{k=1}^{\infty} \in \mathbb B $ such that
\begin{equation}\label{fDM}
f=\sum \limits_{k=1}^{\infty} e_{b_k}c_k,\quad \textit{with} \quad \sum_{k=1}^{\infty}|c_k|\leqslant M.
\end{equation}
It follows from   (\ref{Ite}),  (\ref{Itenorm}),  and Lemma \ref{frT} that
\begin{equation}\label{rm}
 \|r_{m+1}\|^2=\|r_m\|^2-|\left<f_m,e_{a_m}\right>|^2=\|r_m\|^2-|\left<r_m,T_m\right>|^2.
\end{equation}

Now we consider the second term in the right side of equality (\ref{rm}). By applying the maximum selection principle of the $m$-th step, Lemma \ref{frT} and the reproducing property of $ e_{a} $,   we have
\begin{equation}\label{rm2}
\begin{aligned}
|\left<r_m, T_m\right>|=&\sup_{a \in \mathbb B}|\left <r_m,T_{\{a_1,\cdots, a_{m-1},a\}}\right>|
\\
=&\sup_{a \in \mathbb B}|\left <f_m, e_{a}\right>|
\\
=&\sup_{a \in \mathbb B}\sqrt{1-|a|^2}|B_{m-1}^{-*}*r_m(a)|
\\
=&\sup_{ a \in \mathbb B}\sqrt{1-|\hat a|^2}|B_{m-1}^{-1}(\hat {a})||r_m(\hat {a})|
\\
\geqslant&\sup_{[b_k]}\sqrt{1-|\hat b_k|^2}|B_{m-1}^{-1}(\hat {b_k})||r_m(\hat {b_k})|
\\
\geqslant&\sup_{[b_k]}\sqrt{1-| b_k|^2}|r_m( {b_k})|
\\
\geqslant&\sup_{b_k}\sqrt{1-|b_k|^2}|r_m(b_k)|,
\end{aligned}
\end{equation}
where the fourth equality holds because of formula (\ref{z_wanw}) with $$ \hat a=B^c_{m-1}(a)^{-1}a B^c_{m-1}(a)\in [a].$$
In the first inequality, we consider the set $ \{[b_k]_{k\geqslant1}\} $  which is the spherical extension of the set $ \{b_k\}_{k\geqslant1} $.

We claim that
\[\sup_{b_k}\sqrt{1-|b_k|^2}|r_m(b_k)|\geqslant\frac{1}{M}\|r_m\|^2.\]
In fact, by applying the orthogonal iterative process of $ f $, we obtain
\[ \left<r_m,r_m\right>=\big| \langle r_m,f\rangle-\langle r_m,\sum_{k=1}^{m-1}T_k\left<f_k,e_{a_k}\right>\rangle\big|=|\left<r_m,f\right>|.\]
The equation (\ref{fDM}) and  the reproducing property of $ e_{b_k} $ give
\begin{equation}\label{rm1}
\begin{aligned}
|\left<r_m,f\right>|
=&\big|\langle r_m, \sum \limits_{k=1}^{\infty} e_{b_k}c_k\rangle \big|
\\
\leqslant&M\sup_{b_k}|\left<r_m, e_{b_k}\right>|
\\
=&M\sup_{b_k}\sqrt{1-|b_k|^2}|r_m(b_k)|,
\end{aligned}
\end{equation}
The claim is hence verified.

By substituting   (\ref{rm1}) and   (\ref{rm2}) into equality (\ref{rm}), we have
\[\|r_{m+1}\|^2\leqslant \|r_m\|^2(1-\frac{\|r_m\|^2}{M^2}).\]
Applying Lemma \ref{ser_2}, we obtain

\[\|r_{m}\|^2 \leqslant \frac{M^2}{m}.\]

\end{proof}

\section{The slice Hardy space over $ \mathbb H^+ $}
The slice Hardy space over $ \mathbb H^+ $ has also been studied in \cite{AFS_book}. Again, we use an equivalent definition.
Let $ H^+ $ be the right half plane of $ \mathbb H $, i.e.
\[ H^+:=\{ q \in \mathbb H \  \bm{\big\vert} \ Re(q)>0 \}.\]
In this section, we just list the corresponding results of the slice Hardy space  over $ H^+ $,
for which the proofs are similar to those for the slice Hardy space over $ \mathbb B $.

\begin{definition}
The slice Hardy space  $ H^2(\mathbb H^+) $  consists of slice regular functions $ f $, which satisfies
\[ \|f\|^2:=\frac{1}{2\pi}\int_{T^2}\sin \theta_1d\theta\int_{-\infty}^{+\infty}|f(I(\theta)y)|^2dy<\infty,   \]
where $ \theta=(\theta_1, \theta_2)\in T^2= [0,\pi]^2$. Furthermore,  $ H^2(\mathbb H^+) $  is a Hilbert space.
\end{definition}

%\begin{remark}
Denote by  $ \left < \cdot, \cdot \right> $ the  inner product with the induced   norm $ \|\cdot\| $.
%\end{remark}

For any $ a \in \mathbb H^+ $,  the slice normalized Szeg\"o kernel is  a left slice regular function over $ \mathbb H^+ $,
defined as
\[ e_a(q):=\sqrt{\frac{Re(a)}{\pi}}(q+\bar a)^{-*},  \]
where $ -* $ is the regular reciprocal in Definition \ref{*} and $ Re(a) $ is the real part of $ a\in \mathbb H $.
Then $ e_a $ is a reproducing kernel of $ H^2(\mathbb H^+) $, i.e.
\begin{equation}
\left < f, e_a \right>=\sqrt{4\pi Re(a)}f(a).
\end{equation}
Besides, the slice normalized Szeg\"o kernel $ e_a $ is a dictionary of $ H^2(\partial \mathbb H^+) $,  i.e.
\[\overline{span}_{\mathbb H}\{e_a\vert a\in \mathbb H\}=H^2( \partial \mathbb H^+),\]

The Blaschke product is
\[B_k(q):={\prod \limits^{*}}_{j=1}^{k} (q+\bar a_j)^{-*}*(q-a_j),\]
where $ a_k \in \mathbb H_+$ for any $ k\in \{1, 2,\cdots\}. $

In the right half plane $ \mathbb H_+ $,  the slice  rational orthogonal system (i.e. the  slice Takenaka-Malmquist  system) consists of weighted Blaschke product, i.e. for any $ k\geqslant 1$,
\[T_k:=B_{k-1}*e_k.\]

\begin{theorem}\label{TM_Th_1}
$ \{T_k\}_{k\geqslant 1} $ is a normal orthogonal system, i.e.
\begin{equation*}
\begin{cases}  \left < T_k, T_k \right>=1, \quad  k\geqslant 1, \\
\left < T_k, T_l \right>=0, \quad 1\leqslant l < k.
\end{cases}
\end{equation*}
\end{theorem}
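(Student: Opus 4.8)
The plan is to prove Theorem \ref{TM_Th_1} by mirroring the structure of the proof of Theorem \ref{TM_Th} for the unit ball, transporting every ingredient to the right half-plane setting. The two essential facts that drove the ball argument were: first, the multiplicative relation (\ref{z_wan}) allowing a $*$-product evaluated on the boundary to be rewritten as an ordinary product $f*g(z)=f(z)g(\tilde z)$ with $\tilde z\in[z]$; and second, the fact that the Blaschke factors are unimodular on the boundary, i.e. $|B_k|^2=\overline{B_k}B_k=1$ on $\partial\mathbb B_I$. In the half-plane case the boundary is the imaginary axis $\{I(\theta)y : y\in\mathbb R\}$, so I must first establish the corresponding unimodularity statement: that the half-plane Blaschke factor $(q+\bar a)^{-*}*(q-a)$ has modulus one when $q$ ranges over the imaginary axis. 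This follows because for $q=Iy$ purely imaginary, $|q-a|=|q+\bar a|$ (the distance from $Iy$ to $a$ equals the distance to $-\bar a$, which is the reflection of $a$ across the imaginary axis), and the $*$-product-to-pointwise-product conversion preserves modulus since conjugation and $[q]$-membership leave $|\cdot|$ invariant.

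With unimodularity in hand, the diagonal case $\langle T_k,T_k\rangle=1$ proceeds exactly as before. First I would restrict to one slice $\mathbb C_I$ and write the inner product as a boundary integral over $\partial\mathbb B_K$ (here the imaginary axis intersected with $\mathbb C_K$). Using (\ref{z_wan}), I would rewrite $B_k*e_k(Iy)=B_k(Iy)e_k(Jy)$ for an appropriate $J\in\mathbb S$ obtained from the conjugation $e^{Jt}=B_k^{-1}e^{It}B_k$ adapted to this setting, then cancel the unimodular Blaschke factor $\overline{B_k(Iy)}B_k(Iy)=1$. After a change of variables this reduces $\langle T_k,T_k\rangle$ to $\langle e_k,e_k\rangle$, which by the reproducing property $\langle f,e_a\rangle=\sqrt{4\pi Re(a)}f(a)$ equals $\sqrt{4\pi Re(a_k)}\,e_k(a_k)=1$ by the normalization built into $e_a$. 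Independence of the imaginary unit $I$ then upgrades the one-slice identity to the full statement.

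For the off-diagonal case $\langle T_k,T_l\rangle=0$ with $l<k$, I would factor out the common initial Blaschke product by writing $B_k*e_k=B_l*g$, where $g=\big({\prod^*}_{j=l}^{k-1}(q+\bar a_j)^{-*}*(q-a_j)\big)*e_k$, exactly paralleling the ball proof. The unimodular cancellation of $B_l$ on the boundary again collapses the integral to $\langle e_l,g\rangle$, and the Cauchy/reproducing-kernel evaluation gives $\sqrt{4\pi Re(a_l)}\,g(a_l)$. The key observation is that $g(a_l)=0$: the factor $(q-a_l)$ appears in the product defining $g$ (it is the first factor in the range $j=l,\dots,k-1$), so $g$ vanishes at $a_l$ after accounting for how the $*$-product zero structure interacts with evaluation via Proposition \ref{BB_a}. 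This yields orthogonality.

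The main obstacle I anticipate is the half-plane unimodularity claim $|(Iy+\bar a)^{-*}*(Iy-a)|=1$, together with making the $*$-product-to-pointwise conversion on the unbounded imaginary axis fully rigorous; unlike the unit circle, the boundary here is noncompact, so I must confirm that the change of variables $y\mapsto$ (the $[q]$-shifted argument) and the Cauchy-type evaluation remain valid, and that the relevant integrals converge. The cleaner route is to verify unimodularity purely algebraically from $|Iy-a|=|Iy+\bar a|$ for real $y$ and then invoke the isometric structure of the half-plane Szeg\H{o} kernel, so that the entire argument of Theorem \ref{TM_Th} transfers without re-examining convergence at each step. Everything else is a faithful translation of the ball computation.
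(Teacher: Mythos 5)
Your proposal is correct and follows exactly the route the paper intends: Section 7 gives no separate proof of Theorem \ref{TM_Th_1}, stating only that the arguments are analogous to the unit-ball case, and your write-up is precisely the proof of Theorem \ref{TM_Th} transported to $\mathbb H^+$ (unimodularity of $(q+\bar a)^{-*}*(q-a)$ on the imaginary axis via the reflection identity $|\hat q-a|=|\hat q+\bar a|$ at the conjugated point $\hat q\in[q]$ from (\ref{z_wanw}), cancellation of the common Blaschke prefix, and the reproducing-kernel evaluation $\sqrt{4\pi\,Re(a_l)}\,g(a_l)=0$). The one point worth stating carefully is that the unimodularity comes from evaluating numerator and denominator at the \emph{same} shifted purely imaginary point via (\ref{z_wanw}), which you essentially note; otherwise this is a faithful completion of the paper's deferred proof.
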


Now we consider the slice adaptive Fourier decomposition.
For any $ f \in H^2(\mathbb H^+) $ and $ a_1,a_2, a_3 \cdots \in \mathbb H^+ $, there is an iterative process:
\begin{equation}
\begin{aligned}
f_1(q)=&e_{a_1}(q)\left<f_1,e_{a_1}\right>+B_{a_1}*f_2(q)
\\
=&e_{a_1}(q)\left<f_1,e_{a_1}\right>+B_{a_1}*(e_{a_2}(q)\left<f_2,e_{a_2}\right>+B_{a_2}*f_3(q))
\\
=&T_1(q)\left<f_1,e_{a_1}\right>+T_2(q)\left<f_2,e_{a_2}\right>+B_{a_1}*B_{a_2}*f_3(q)
\\
=&T_1(q)\left<f_1,e_{a_1}\right>+T_2(q)\left<f_2,e_{a_2}\right>+B_{a_1}*B_{a_2}*(e_{a_3}(q)\left<f_3,e_{a_3}\right>+B_{a_3}*f_4(q))
\\
=&T_1(q)\left<f_1,e_{a_1}\right>+T_2(q)\left<f_2,e_{a_2}\right>+T_3(q)\left<f_3,e_{a_3}\right>+B_{a_1}*B_{a_2}*B_{a_3}*f_4(q)
\\
=&\cdots
\end{aligned}
\end{equation}
Orthogonality  of the Takenaka-Malmquist system $ \{T_k\}_{k\geqslant 1} $ implies the following energy relation:
\begin{equation*}
|f_1(q)|^2=|\left<f_1,e_{a_1}\right>|^2+|\left<f_2,e_{a_2}\right>|^2+\cdots +|\left<f_k,e_{a_k}\right>|^2+ |f_{k+1}(q)|^2.
\end{equation*}

\begin{theorem}\label{max}
For any $ f\in H^2(\mathbb H^+ ) $, there exists an element $ a \in \mathbb H^+ $ such that
\[ |\left<f,e_a\right>|=\max\limits_{b\in\mathbb H^+}\{|\left<f,e_b\right>|\}.\]
\end{theorem}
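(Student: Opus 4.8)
The plan is to prove Theorem \ref{max} by following the template of Theorem \ref{max-102}, the only genuine difference being that $\mathbb H^+$ is unbounded, so the ``boundary'' at which the functional must decay now consists of the imaginary axis $\{\mathrm{Re}(q)=0\}$ \emph{together with} the point at infinity. Writing $\Phi(a):=|\langle f,e_a\rangle|$, I would first observe that the reproducing property $\langle f,e_a\rangle=\sqrt{4\pi\,\mathrm{Re}(a)}\,f(a)$ makes $\Phi$ continuous on $\mathbb H^+$, since $f$ is slice regular (hence continuous) there and $a\mapsto\sqrt{\mathrm{Re}(a)}$ is continuous. The whole proof then reduces to the claim
\[ \Phi(a)\longrightarrow 0 \qquad\text{as } a \text{ leaves every compact subset of } \mathbb H^+, \]
that is, whenever $\mathrm{Re}(a)\to 0$ or $|a|\to\infty$. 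Granting this, for fixed $\varepsilon>0$ there is a compact $K\subset\mathbb H^+$ with $\Phi<\varepsilon$ off $K$; a continuous function on the compact set $K$ attains its maximum, and that maximum dominates $\sup_{a}\Phi(a)$, producing the desired maximizer.

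To establish the decay claim I would exploit density of the dictionary: there is a finite combination $g=\sum_{k=1}^N e_{b_k}c_k$ with $\|f-g\|<\varepsilon$. Splitting
\[ \langle f,e_a\rangle=\langle f-g,e_a\rangle+\langle g,e_a\rangle, \]
the first term is controlled uniformly in $a$ by the Cauchy--Schwarz inequality (\ref{C-S}) together with the normalization $\|e_a\|=1$, which follows from $\langle e_a,e_a\rangle=\sqrt{4\pi\,\mathrm{Re}(a)}\,e_a(a)=1$ upon using $a+\bar a=2\mathrm{Re}(a)$; thus $|\langle f-g,e_a\rangle|\le\|f-g\|<\varepsilon$ for every $a$. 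It therefore suffices to prove the decay for the explicit function $g$.

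The main obstacle is exactly this boundary estimate for $g$, and it is here that non-compactness forces work absent in the ball case. Using the regular reciprocal $f^{-*}=\frac{1}{f^s}f^c$ one obtains the closed form
\[ e_{b_k}(q)=\sqrt{\tfrac{\mathrm{Re}(b_k)}{\pi}}\;\frac{q+b_k}{q^2+2\,\mathrm{Re}(b_k)\,q+|b_k|^2}, \]
whose symmetrized denominator is a real-coefficient quadratic with both zeros in the open left half plane (real part $-\mathrm{Re}(b_k)<0$). Evaluating the modulus at $a=x+yI$ (alternatively, passing slicewise to a conjugate point $\hat a\in[a]$ via (\ref{z_wanw}), for which $|\hat a|=|a|$ and $\mathrm{Re}(\hat a)=\mathrm{Re}(a)$) gives $|e_{b_k}(a)|\le C_k/|a|$ for large $|a|$, and $|e_{b_k}(a)|\le C_k'$ uniformly as $\mathrm{Re}(a)\to0$ on bounded sets, the denominator staying bounded away from $0$ because the poles lie in the left half plane. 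Hence
\[ \sqrt{\mathrm{Re}(a)}\,|g(a)|\le\sum_{k=1}^N|c_k|\,\sqrt{\mathrm{Re}(a)}\,|e_{b_k}(a)|\longrightarrow 0, \]
both as $|a|\to\infty$ (since $\sqrt{\mathrm{Re}(a)}/|a|\le|a|^{-1/2}\to0$) and as $\mathrm{Re}(a)\to0$ (a vanishing factor against a bounded one). Combined with the Cauchy--Schwarz bound this yields $\Phi(a)<2\varepsilon$ outside a suitable compact set, proving the decay claim and hence the theorem.

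I expect the delicate point to be making the two boundary regimes, $\mathrm{Re}(a)\to0$ and $|a|\to\infty$, simultaneous and uniform over the unbounded domain in the quaternionic slice setting, in particular controlling $|e_{b_k}(a)|$ through the $*$-reciprocal; the identities $|\hat a|=|a|$ and $\mathrm{Re}(\hat a)=\mathrm{Re}(a)$ for $\hat a\in[a]$ are what reduce everything to the scalar estimates above. One could in principle bypass these computations by transporting Theorem \ref{max-102} through a slice Cayley transform $\mathbb B\to\mathbb H^+$ inducing an isometric isomorphism of the two Hardy spaces that carries normalized Szeg\"o kernels to normalized Szeg\"o kernels up to unimodular factors, but the direct argument stays closest to the proof already given for $\mathbb B$.
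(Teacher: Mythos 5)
Your proposal is correct and follows essentially the approach the paper intends: the paper gives no proof of Theorem \ref{max} at all (Section 7 only asserts that the arguments are ``similar'' to the ball case), and your argument is the natural transfer of the proof of Theorem \ref{max-102}, via the splitting $\langle f,e_a\rangle=\langle f-g,e_a\rangle+\langle g,e_a\rangle$, Cauchy--Schwarz, and an explicit decay estimate for the dictionary term. The two adjustments you make are exactly the ones required and are handled correctly: polynomials are not in $H^2(\mathbb H^+)$, so the approximant $g$ must be a finite combination of Szeg\"o kernels (density of $\mathcal D$ is asserted in the paper and follows from the reproducing property alone, so there is no circularity), and the boundary now has the two regimes $\mathrm{Re}(a)\to0$ and $|a|\to\infty$, which your bounds $\sqrt{\mathrm{Re}(a)}\,|e_{b_k}(a)|\le C_k|a|^{-1/2}$ and ``bounded times vanishing'' cover, using that the zeros of $(q+\bar b_k)^s$ lie in the open left half space and that $\mathrm{Re}$ and $|\cdot|$ are constant on $[a]$.
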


\begin{theorem}
Let $ f\in H^2(\mathbb H^+) .$ If for every $ n\geqslant 1 $ the parameters $ a_n $  in relation to the reduced remainder function $ f_n $ is chosen according to Theorem \ref{max}, then
\[ f=\sum\limits_{n=1}^{\infty}T_n\left<f,e_{a_n}\right>=\sum\limits_{n=1}^{\infty}T_n\left<f,T_n\right>.\]
\end{theorem}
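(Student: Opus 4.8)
The plan is to transplant, line for line, the architecture of the ball case (Sections~5 and~6) to the half-plane, invoking the $\mathbb H^+$-analogues of the dictionary completeness, of Lemma~\ref{frT}, and of the maximal selection principle (Theorem~\ref{max}), all of which are available under the ``similar proof'' principle announced at the start of Section~7. The argument splits into a convergence step (the series converges to \emph{something}) and an identification step (that something is $f$).

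First I would show convergence of the partial sums. The orthonormality of $\{T_n\}$ (Theorem~\ref{TM_Th_1}) turns the iteration (\ref{Ite}) into the energy identity
\[
\|f\|^2=\sum_{n=1}^{k}|\langle f_n,e_{a_n}\rangle|^2+\|f_{k+1}\|^2,
\]
so that $\sum_{n\geqslant 1}|\langle f_n,e_{a_n}\rangle|^2\leqslant\|f\|^2<\infty$. Hence the increments of $S_k:=\sum_{n=1}^{k}T_n\langle f,e_{a_n}\rangle$ have square-summable norms, again by orthonormality, and $\{S_k\}$ is Cauchy and converges to some $S_\infty\in H^2(\mathbb H^+)$; the half-plane analogue of Lemma~\ref{frT}, namely $\langle f_n,e_{a_n}\rangle=\langle f,T_n\rangle$, identifies the two displayed forms of the sum. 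It then remains to prove $S_\infty=f$, equivalently that the standard remainder $r_{k+1}=f-S_k=B_{k+1}*f_{k+1}$ satisfies $\|r_{k+1}\|\to 0$.

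The heart of the matter is to convert the maximal selection into decay of $\sup_b|\langle r_k,e_b\rangle|$. The energy identity shows $\|r_k\|$ is non-increasing, and the extracted energies $|\langle f_k,e_{a_k}\rangle|^2=\|r_k\|^2-\|r_{k+1}\|^2$ are the terms of a convergent series, hence tend to $0$. By Theorem~\ref{max} this gives $\sup_{b}|\langle f_k,e_b\rangle|=|\langle f_k,e_{a_k}\rangle|\to 0$. Running the half-plane version of the chain of estimates in (\ref{rm2}) — using the pointwise reduction (\ref{z_wanw}) of the $*$-product together with the contractivity $|B_k|\leqslant 1$ on $\mathbb H^+$, whence $|B_k^{-1}|\geqslant 1$ — one passes from the reduced remainder to the standard one: for each $b$ one plugs $b$ into the supremum and discards the factor $|B_k^{-1}|\geqslant 1$, obtaining
\[
\sup_{b\in\mathbb H^+}|\langle r_k,e_b\rangle|\leqslant|\langle f_k,e_{a_k}\rangle|\longrightarrow 0.
\]
I would then close by density: fix $\varepsilon>0$ and pick a finite combination $p=\sum_{j=1}^{N}e_{b_j}c_j$ of Szeg\"o kernels with $\|f-p\|<\varepsilon$, possible since $\mathcal D$ spans a dense subspace of $H^2(\mathbb H^+)$. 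Since $r_k$ is orthogonal to the partial sum one has $\|r_k\|^2=\langle r_k,f\rangle=\langle r_k,p\rangle+\langle r_k,f-p\rangle$, where $|\langle r_k,p\rangle|\leqslant\big(\sup_b|\langle r_k,e_b\rangle|\big)\sum_{j=1}^{N}|c_j|\to 0$ for the fixed $p$, and $|\langle r_k,f-p\rangle|\leqslant\|r_k\|\,\varepsilon\leqslant\|f\|\,\varepsilon$ by Cauchy--Schwarz; thus $\limsup_k\|r_k\|^2\leqslant\|f\|\,\varepsilon$, and letting $\varepsilon\to 0$ yields $\|r_k\|\to 0$.

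I expect the main obstacle to be the half-plane incarnation of the estimate (\ref{rm2}): controlling $\sup_b|\langle r_k,e_b\rangle|$ by the extracted energy requires the joint bookkeeping of the $*$-product's pointwise formula, the change of argument $\hat a=B_k^{c}(a)^{-1}aB_k^{c}(a)$, and the contractivity of the Blaschke product. Moreover, unlike in $H^2(\mathbb B)$, the unboundedness of $\mathbb H^+$ means that the maximal selection principle (Theorem~\ref{max}) must rule out escape of energy both toward the imaginary axis, where the weight $\sqrt{\operatorname{Re}(a)}$ degenerates, and toward $\infty$; verifying that $|\langle f,e_a\rangle|\to 0$ in both regimes is the one genuinely new ingredient relative to the ball, and I would handle it by the same polynomial-density reduction used in the proof of Theorem~\ref{max-102}.
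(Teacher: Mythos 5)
Your proposal is correct and follows essentially the same route as the paper: the paper gives no written proof for the half-plane case (Section 7 only states the results) and, for the ball case, defers to a word-for-word translation of Theorem 2.2 of Qian--Wang, whose argument is exactly your two-step scheme — square-summability of the extracted energies giving convergence of the partial sums, then identification of the limit with $f$ by combining the maximal selection principle, the estimate $\sup_b|\langle r_k,e_b\rangle|\leqslant|\langle f_k,e_{a_k}\rangle|$ obtained from the pointwise form of the $*$-product and $|B_k|\leqslant 1$, and density of $\mathrm{span}\,\mathcal D$. Your write-up in fact supplies the details the paper omits, including the correct flagging of the only genuinely new point (behaviour of $|\langle f,e_a\rangle|$ as $\operatorname{Re}(a)\to 0$ or $a\to\infty$ in Theorem \ref{max}).
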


Denote
\[ H^2(\mathcal D, M):= \{ f\in H^2(\mathbb H^+):f=\sum \limits_{k=1}^{\infty}  e_{b_k}c_k, \ e_{b_k}\in \mathcal D, \
\sum \limits_{k=1}^{\infty} |c_k| \leqslant M \}, \]
where $ \mathcal D $ is the dictionary consisting of the slice normalized Szeg\"{o} kernels and $ M $ is a positive constant.
Then we have the following
\begin{theorem}
If $ f \in H^2(\mathcal D, M) $, then
\[ \| r_m\| \leqslant \frac{M}{\sqrt{m}}.\]
\end{theorem}

\bibliographystyle{amsplain}

\end{document}